\newcommand{\ignore}[1]{}
\newtheorem{example}{Example}[section]
\newcommand{\C}{\mathbb{C}} % complex field
\newcommand{\E}{\mathbb{E}}
\newcommand{\N}{\mathbb{N}} % nature numbers
\newcommand{\PP}{\mathbb{P}} % nature numbers
\newcommand{\R}{\mathbb{R}} % real field
\newcommand{\SSS}{\mathbb{S}}
\newcommand{\ub}{\mathbf{u}}
\newcommand{\vb}{\mathbf{v}}
\newcommand{\xb}{\mathbf{x}}
\newcommand{\yb}{\mathbf{y}}
\newcommand{\Ab}{\mathbf{A}}
\newcommand{\Bb}{\mathbf{B}}
\newcommand{\Eb}{\mathbf{E}}
\newcommand{\Gb}{\mathbf{G}}
\newcommand{\Ib}{\mathbf{I}}
\newcommand{\Mb}{\mathbf{M}}
\newcommand{\Nb}{\mathbf{N}}
\newcommand{\Pb}{\mathbf{P}}
\newcommand{\Qb}{\mathbf{Q}}
\newcommand{\Rb}{\mathbf{R}}
\newcommand{\Ub}{\mathbf{U}}
\newcommand{\Vb}{\mathbf{V}}
\newcommand{\Wb}{\mathbf{W}}
\newcommand{\Xb}{\mathbf{X}}
\newcommand{\Yb}{\mathbf{Y}}
\newcommand{\Ncal}{\mathcal{N}}
\newcommand{\Ucal}{\mathcal{U}}
\newcommand{\Vcal}{\mathcal{V}}
\newcommand{\Xcal}{\mathcal{X}}
\newcommand{\eps}{\epsilon}
\newcommand{\betab}{\boldsymbol{\beta}}
\newcommand{\nub}{\boldsymbol{\nu}}
\newcommand{\sigmab}{\boldsymbol{\sigma}}
\newcommand{\Sigmab}{\boldsymbol{\Sigma}}
\newcommand{\Omegab}{\boldsymbol{\Omega}}
\newcommand{\dfeq}{\triangleq}
\newcommand{\aleq}{\preccurlyeq}
\newcommand{\ageq}{\succcurlyeq}
\newcommand{\pinv}{\dagger}
\newcommand*{\argmin}{\mathop{\mathrm{argmin}}}
\newcommand*{\argmax}{\mathop{\mathrm{argmax}}}
\newcommand{\tr}{\mathop{\mathrm{tr}}}
\newcommand{\diag}{\mathop{\mathrm{diag}}}
\newcommand{\rank}{\mathop{\mathrm{rank}}}
\newcommand{\range}{\mathop{\mathrm{Range}}}
\newcommand{\row}{\mathop{\mathrm{Row}}}
\newcommand{\col}{\mathop{\mathrm{Col}}}
\newcommand{\spn}{\mathop{\mathrm{span}}}
\newcommand{\ortho}{\mathop{\mathrm{ortho}}}
\newcommand{\wh}[1]{\widehat{#1}}
\newcommand{\wt}[1]{\widetilde{#1}}
\newcommand{\eg}{\emph{e.g.}}
\newcommand{\ie}{\emph{i.e.}}
\newcommand{\iid}{\emph{i.i.d.}}
\newcommand{\cf}{\emph{cf.}\ }
\newcommand{\rbr}[1]{\left(#1\right)}
\newcommand{\sbr}[1]{\left[#1\right]}
\newcommand{\cbr}[1]{\left\{#1\right\}}
\newcommand{\nbr}[1]{\left\|#1\right\|}
\newcommand{\abbr}[1]{\left|#1\right|}
\newcommand{\nnbr}[1]{{\left\vert\kern-0.25ex\left\vert\kern-0.25ex\left\vert #1 \right\vert\kern-0.25ex\right\vert\kern-0.25ex\right\vert}}
\newcommand{\csepp}[2]{\left\{#1 ~\middle|~ #2 \right\}}
\newcommand{\norm}[1]{\|#1\|}
\newcommand{\bmat}[1]{\begin{bmatrix} #1 \end{bmatrix}}
\definecolor{cyan}{rgb}{0.0, 1.0, 1.0}
\definecolor{magenta}{rgb}{0.79, 0.08, 0.48}
\definecolor{cssgreen}{rgb}{0.0, 0.5, 0.0}
\renewcommand{\b}{\textbf}
\renewcommand{\t}[1]{\text{#1}}
\crefname{hypothesis}{Hypothesis}{Hypotheses}
\title{Efficient Bounds and Estimates for Canonical Angles in Randomized Subspace Approximations\thanks{
  Submitted to the editors \today.
  % \funding{}
}}
\author{
  Yijun Dong\thanks{Courant Institute, New York University (\email{yd1319@nyu.edu}).}
  \and Per-Gunnar Martinsson\thanks{Department of Mathematics \& Oden Institute, University of Texas at Austin (\email{pgm@oden.utexas.edu}).}
  \and Yuji Nakatsukasa\thanks{Mathematical Institute, University of Oxford (\email{nakatsukasa@maths.ox.ac.uk}).}
}
\begin{document}

\maketitle

% REQUIRED
\begin{abstract}
    Randomized subspace approximation with ``matrix sketching'' is an effective approach for constructing approximate partial singular value decompositions (SVDs) of large matrices. 
    The performance of such techniques has been extensively analyzed, and very precise estimates on the distribution of the residual errors have been derived.
    However, our understanding of the accuracy of the computed singular vectors (measured in terms of the canonical angles between the spaces spanned by the exact and the computed singular vectors, respectively) remains relatively limited.
    In this work, we present {\emph{practical bounds and estimates for canonical angles} of randomized subspace approximation that \emph{can be computed efficiently either a priori or a posteriori}, without assuming prior knowledge of the true singular subspaces}. 
    Under moderate oversampling in the randomized SVD, our prior probabilistic bounds are asymptotically tight and can be computed efficiently, while bringing a clear insight into the balance between oversampling and power iterations given a fixed budget on the number of matrix-vector multiplications.
    The numerical experiments demonstrate the empirical effectiveness of these canonical angle bounds and estimates on different matrices under various algorithmic choices for the randomized SVD.
\end{abstract}  

% REQUIRED
\begin{keywords}
Randomized subspace iteration, singular vectors, canonical angles 
\end{keywords}

% REQUIRED
\begin{MSCcodes}
%\rr{68Q25, 68R10, 68U05}
15A18, 15A42, 65F15, 68W20 
\end{MSCcodes}

\section{Introduction}

In light of the ubiquity of high-dimensional data in modern computation, dimension reduction tools like low-rank matrix {approximations} are becoming indispensable tools for managing large data sets. In general, the goal of low-rank matrix {approximation} is to identify bases of proper low-dimensional subspaces that well encapsulate the dominant components in the original column and row spaces.
As one of the most well-established forms of matrix decompositions, the truncated singular value decomposition (SVD) is known to achieve the optimal low-rank approximation errors for any given ranks \cite{eckart1936}. Moreover, the corresponding left and right leading singular subspaces can be broadly leveraged for problems like principal component analysis, canonical correlation analysis, spectral clustering \cite{boutsidis2015spectral}, and leverage score sampling for matrix skeleton selection \cite{drineas2008relative,mahoney2009,drineas2012}.

However, for large matrices, the computational cost of classical algorithms for computing the SVD (\cf \cite[Lec.~31]{trefethen1997} or \cite[Sec.~8.6.3]{golub2013}) quickly becomes prohibitive. Fortunately, a randomization framework known as ``matrix sketching'' \cite{halko2011finding,woodruff2015sketching} provides a simple yet effective remedy for this challenge by {projecting} large matrices to random low-dimensional subspaces where the classical SVD algorithms can be executed efficiently. 

Concretely, for an input matrix $\Ab \in \C^{m \times n}$ and a target rank $k \ll \min(m,n)$, the basic version of the randomized SVD \cite[Alg.~4.1]{halko2011finding} starts by drawing a Gaussian random matrix $\Omegab \in \C^{n \times l}$ for a sample size $l$ that is slightly larger than $k$ so that $k < l \ll \min(m,n)$. Then through a matrix-matrix multiplication $\Xb = \Ab\Omegab$ with $O(mnl)$ complexity, the $n$-dimensional row space of $\Ab$ is {projected} to a random $l$-dimensional subspace. With the low-dimensional range approximation $\Xb$, a rank-$l$ randomized SVD $\wh\Ab_l = \wh\Ub_l \wh\Sigmab_l \wh\Vb_l^*$ can be constructed efficiently by computing the QR and SVD of small matrices in $O\rbr{(m+n)l^2}$ time. When the spectral decay in $\Ab$ is slow, a few power iterations $\Xb=\rbr{\Ab\Ab^*}^q\Ab\Omegab$ (usually $q=1,2$) can be incorporated to enhance the accuracy, 
\cf\cite{halko2011finding} Algorithms 4.3 and 4.4.

Let 
$\Ab=\Ub\Sigmab\Vb^*$\footnote{
Here $\Ub \in \C^{m \times r}$, $\Vb \in \C^{n \times r}$, and $\Sigmab \in \C^{r \times r}$. $\Sigmab$ is a diagonal matrix with positive non-increasing diagonal entries, and $r \le \min(m,n)$.
} denote the (unknown) full SVD of $\Ab$. In this work, we explore the alignment between the true leading rank-$k$ singular subspaces $\Ub_k,\Vb_k$ and their respective rank-$l$ approximations $\wh\Ub_l, \wh\Vb_l$ in terms of the canonical angles $\angle\rbr{\Ub_k,\wh\Ub_l}$ and $\angle\rbr{\Vb_k,\wh\Vb_l}$. We introduce prior statistical guarantees and unbiased estimates for these angles with respect to $\Sigmab$, as well as posterior deterministic bounds with additional dependence on $\wh\Ab_l = \wh\Ub_l \wh\Sigmab_l \wh\Vb_l^*$. {In particular, we are interested in \emph{practical bounds and estimates that can be computed efficiently}, without assuming prior knowledge of the true singular subspaces.}

\subsection{Our Contributions}
\paragraph{Prior probabilistic bounds and estimates with insight on oversampling-power iteration balance}
Evaluating the randomized SVD with a fixed budget on the number of matrix-vector multiplications, the computational resource can be leveraged in two ways -- oversampling (characterized by $l-k$) and power iterations (characterized by $q$). A natural question is \emph{how to distribute the computation between oversampling and power iterations for better subspace approximations}? 

Answers to this question are problem-dependent: when aiming to minimize the canonical angles between the true and approximated leading singular subspaces, the prior probabilistic bounds and estimates on the canonical angles provide primary insights. 
To be precise, with isotropic random subspace embeddings and sufficient oversampling, the accuracy of subspace approximations depends jointly on the spectra of the target matrices, oversampling, and the number of power iterations. 
In this work, we present a set of prior probabilistic bounds that precisely quantify the relative benefits of oversampling versus power iterations.
Specifically, the canonical angle bounds in \Cref{thm:space_agnostic_bounds}
\begin{enumerate}[label=(\roman*)]
    \item provide statistical guarantees that are asymptotically tight under sufficient oversampling (\ie, $l=\Omega\rbr{k}$),
    \item unveil a clear balance between oversampling and power iterations for random subspace approximations with given spectra,
    \item can be evaluated in $O(\rank\rbr{\Ab})$ time given access to the (true/estimated) spectra while providing valuable estimations for canonical angles in practice with moderate oversampling (\eg, $l \ge 1.6k$).
\end{enumerate} 
Further, inspired by the derivation of the prior probabilistic bounds, we propose unbiased estimates for the canonical angles with respect to given spectra that admit efficient evaluation and concentrate well empirically.

\paragraph{Posterior residual-based guarantees}
Alongside the prior probabilistic bounds, we present two sets of posterior canonical angle bounds that hold in the deterministic sense and can be approximated efficiently based on the residuals and the spectrum of $\Ab$.

\paragraph{Numerical comparisons}
With numerical comparisons among different canonical angle bounds on a variety of data matrices, we aim to explore the question on \emph{how the spectral decay and different algorithmic choices of randomized SVD affect the empirical effectiveness of different canonical angle bounds}.
Our numerical experiments suggest that, for matrices with subexponential spectral decay, the prior probabilistic bounds usually provide tighter (statistical) guarantees than the (deterministic) guarantees from the posterior residual-based bounds, especially with power iterations. By contrast, for matrices with exponential spectral decay, the posterior residual-based bounds can be as tight as the prior probabilistic bounds, especially with large oversampling. 
The code for numerical comparisons is available at \href{https://github.com/dyjdongyijun/Randomized_Subspace_Approximation}{https://github.com/dyjdongyijun/Randomized\_Subspace\_Approximation}.

\subsection{Related Work}

The randomized SVD algorithm (with power iterations) \cite{2011_martinsson_random1,halko2011finding} has been extensively analyzed as a low-rank approximation problem where the accuracy is usually measured in terms of residual norms, as well as the discrepancy between the approximated and true spectra \cite{halko2011finding,gu2014subspace,martinsson2019randomized,martinsson2020randomized}. For instance, 
\cite[Thm.~10.7, Thm.~10.8]{halko2011finding}
 show that for a given target rank $k$ (usually $k \ll \min\rbr{m,n}$ for the randomized acceleration to be useful), a small constant oversampling $l \ge 1.1 k$ is sufficient to guarantee that the residual norm of the resulting rank-$l$ approximation is close to the optimal rank-$k$ approximation (\ie, the rank-$k$ truncated SVD) error with high probability. Alternatively, \cite{gu2014subspace} investigates the accuracy of the individual approximated singular values $\wh\sigma_i$ and provides upper and lower bounds for each $\wh\sigma_i$ with respect to the true singular value $\sigma_i$.

In addition to providing accurate low-rank approximations, the randomized SVD algorithm also produces estimates of the leading left and right singular subspaces corresponding to the top singular values. When coupled with power iterations (\cite{halko2011finding} Algorithms 4.3 \& 4.4), such randomized subspace approximations are commonly known as randomized power (subspace) iterations. Their accuracy is explored in terms of canonical angles that measure differences between the unknown true subspaces and their random approximations \cite{boutsidis2015spectral,saibaba2018randomized,nakatsukasa2020sharp}. Generally, upper bounds on the canonical angles can be categorized into two types:
\begin{enumerate*}[label=(\roman*)]
    \item probabilistic bounds that establish prior statistical guarantees by exploring the concentration of the alignment between random subspace embeddings and the unknown true subspace, and
    \item residual-based bounds that can be computed a posteriori from the residual of the resulting approximation.
\end{enumerate*}

The existing prior probabilistic bounds on canonical angles~\cite{boutsidis2015spectral,saibaba2018randomized,masseyadmissible} mainly focus on the setting where the randomized SVD is evaluated without oversampling or with a small constant oversampling. Concretely, \cite{boutsidis2015spectral} derives guarantees for the canonical angles evaluated without oversampling (\ie, $l=k$) in the context of spectral clustering. Further, by taking a small constant oversampling (\eg, $l \ge k+2$) into account, Saibaba~\cite{saibaba2018randomized} provides a comprehensive analysis for an assortment of canonical angles between the true and approximated leading singular spaces. {Compared with} our results (\Cref{thm:space_agnostic_bounds}), in both no-oversampling and constant-oversampling regimes, the basic forms of the existing prior probabilistic bounds (\eg, \cite{saibaba2018randomized} Theorem 1) generally depend on the unknown singular subspace $\Vb$. Although such dependence is later lifted using the isotropicity and the concentration of the randomized subspace embedding $\Omegab$ (\eg, \cite{saibaba2018randomized} Theorem 6), the separated consideration on the spectra and the singular subspaces introduces unnecessary compromise to the upper bounds (as we will discuss in \Cref{remark:prior_bound_compare_exist}). In contrast, by allowing a more generous choice of multiplicative oversampling $l=\Omega\rbr{k}$, we present a set of space-agnostic bounds 
{(i.e., bounds that hold regardless of the singular vectors of $\Ab$)}
based on an integrated analysis of the spectra and the singular subspaces that appears to be tighter both from derivation and in practice. 

The classical Davis-Kahan $\sin\theta$ and $\tan\theta$ theorems~\cite{daviskahan} for eigenvector perturbation can be used to compute deterministic and computable bounds for the canonical angles. These bounds have the advantage that they give strict bounds (up to the estimation of the so-called gap) rather than estimates or bounds that hold with high probability (although, as we argue below, the failure probability can be taken to be negligibly low). The Davis-Kahan theorems have been extended to perturbation of singular vectors by Wedin~\cite{wedin1973perturbation}, and recent work~\cite{nakatsukasa2020sharp} derives perturbation bounds for singular vectors computed using a subspace projection method. In this work, we establish canonical angle bounds for the singular vectors in the context of (randomized) subspace iterations. Our results indicate clearly that the accuracy of the right and left singular vectors are usually not identical (i.e., $\Vb$ is more accurate with Algorithm~\ref{algo:rsvd_power_iterations}). 

As a roadmap, we formalize the problem setup in \Cref{sec:problem_setup}, including a brief review of the randomized SVD and canonical angles. In \Cref{sec:space_agnostic}, we present the prior probabilistic space-agnostic bounds. Subsequently, in \Cref{sec:space_agnostic_estimation}, we describe a set of unbiased canonical angle estimates that is closely related to the space-agnostic bounds. Then in \Cref{sec:separate_spec_bounds}, we introduce two sets of posterior residual-based bounds. Finally, in \Cref{sec:experiments}, we instantiate the insight cast by the space-agnostic bounds on the balance between oversampling and power iterations and demonstrate the empirical effectiveness of different canonical angle bounds and estimates with numerical comparisons.

\section{Problem Setup}\label{sec:problem_setup}

In this section, we first recapitulate the randomized SVD algorithm (with power iterations) \cite{halko2011finding} for which we analyze the accuracy of the resulting singular subspace approximations. Then, we review the notion of canonical angles \cite{golub2013} that quantify the {distance} between two subspaces of the same Euclidean space.

\subsection{Notation} 
For any $k \in \N$, we denote $[k]=\cbr{1,\cdots,k}$. {We adapt the MATLAB notation for matrix slicing and stacking throughout this work. In particular, for any set of matrices $\csepp{\Mb_i \in \C^{d \times d_i}}{i \in [n]}$, we denote $\sbr{\Mb_1,\cdots,\Mb_n} \in \C^{d \times \rbr{\sum_{i=1}^{n} d_i}}$ as the horizontal concatenation.}

We start by introducing notations for the SVD of a given matrix $\Ab \in \C^{m \times n}$ of rank $r$:
\begin{align*}
    \Ab 
    = 
    \underset{m \times r}{\Ub}\ \underset{r \times r}{\Sigmab}\ \underset{r \times n}{\Vb^*} 
    = 
    \bmat{\ub_{1} & \dots & \ub_{r}} \bmat{\sigma_{1} && \\ &\ddots& \\ && \sigma_{r}} \bmat{\vb_{1}^* \\ \vdots \\ \vb_{r}^*}.
\end{align*}
For any $1 \le k \le r$, we let $\Ub_{k} \triangleq \sbr{\ub_{1},\dots,\ub_{k}}$ and $\Vb_{k} \triangleq \sbr{\vb_{1},\dots,\vb_{k}}$ denote the orthonormal bases of the dimension-$k$ left and right singular subspaces of $\Ab$ corresponding to the top-$k$ singular values, while $\Ub_{r \setminus k} \triangleq\sbr{\ub_{k+1},\dots,\ub_{r}}$ and $\Vb_{r \setminus k} \triangleq\sbr{\vb_{k+1},\dots,\vb_{r}}$ are orthonormal bases of the respective orthogonal complements. The diagonal submatrices consisting of the spectrum, $\Sigmab_{k} \triangleq \diag\rbr{\sigma_{1}, \dots, \sigma_{k}}$ and $\Sigmab_{r \setminus k} \triangleq \diag\rbr{\sigma_{k+1}, \dots, \sigma_{r}}$, follow analogously.
{We use $\|\cdot \|_2$ to denote the spectral norm (largest singular value) of a matrix, and $\|\cdot \|_F$ denotes the Frobenius norm. 
Equations and inequalities involving $\nnbr{\cdot}$ hold for any (fixed) unitarily invariant norm.
}

Meanwhile, for the QR decomposition of an arbitrary matrix $\Mb \in \C^{d \times l}$ ($d \geq l$) {with full column rank $\rank(\Mb)=l$}, we denote $\Mb = \bmat{\Qb_{\Mb} & \Qb_{\Mb,\perp}} \bmat{\Rb_{\Mb} \\ \b{0}}$ such that {$\Qb_{\Mb} = \ortho\rbr{\Mb} \in \C^{d \times l}$} and $\Qb_{\Mb,\perp} \in \C^{d \times (d-l)}$ consist of orthonormal bases of the subspace spanned by the columns of $\Mb$ and its orthogonal complement.
Furthermore, we denote the {singular values} of $\Mb$ by $\sigma\rbr{\Mb}$, a $\rank(\Mb) \times \rank(\Mb)$ diagonal matrix with singular values $\sigma_1\rbr{\Mb} \ge \dots \ge \sigma_{\rank(\Mb)}\rbr{\Mb} > 0$ on the diagonal. 

Throughout, it is helpful to have in mind the ordering $k\leq l \ll r\leq \min(m,n)$, where $k$ is the dimension of the subspace of interest, $l$ is the (oversampled) dimension of subspace used in the algorithm, and $r=\mbox{rank}(\Ab)\leq \min(m,n)$. 

Given an arbitrary distribution $P: \C \to [0,1]$, we denote $\Omegab \sim P\rbr{\C^{n \times l}}$ as a random matrix with $\iid$ entries $\Omega_{ij} \sim P$ for all $i \in [n],~ j \in [l]$.

{We adapt the standard asymptotic notations as follows: for any functions $f,g: \R_+ \to \R_+$, we write $f = O\rbr{g}$ if there exists some constant $C>0$ such that $f(x) \leq C g(x)$ for all $x \in \R_+$; $f = \Omega\rbr{g}$ if $g = O\rbr{f}$; $f = \Theta\rbr{g}$ if $f = O\rbr{g}$ and $f = \Omega\rbr{g}$.}

\subsection{Randomized SVD and Power Iterations}

\begin{algorithm}
\caption{Randomized SVD (with power iterations) \cite{halko2011finding}}\label{algo:rsvd_power_iterations}
\begin{algorithmic}[1]
\REQUIRE $\Ab \in \C^{m \times n}$, power $q \in \cbr{0,1,2,\dots}$, oversampled rank $l \in \N$ ($l< r = \rank(\Ab)$)
\ENSURE $\wh\Ub_l \in \C^{m \times l}$, $\wh\Vb_l \in \C^{n \times l}$, $\wh\Sigmab_l \in \C^{l \times l}$ such that $\wh\Ab_l = \wh\Ub_l \wh\Sigmab_l \wh\Vb_l^*$

\STATE Draw $\Omegab \sim P\rbr{\C^{n \times l}}$ 
% \rr{YN: define this notation?? I've never seen this.}
with $\Omega_{ij} \sim \Ncal\rbr{0,l^{-1}}~\iid$ such that $\E\sbr{\Omegab \Omegab^*} = \Ib_n$
\STATE $\Xb^{(q)} = \rbr{\Ab \Ab^*}^q \Ab \Omegab$
\STATE $\Qb_\Xb = \ortho\rbr{\Xb^{(q)}}$
\STATE {$\sbr{\wt\Ub_l ,\wh\Sigmab_l, \wh\Vb_l^*} = \mathrm{svd}\rbr{\Qb_\Xb^* \Ab}$} (where $\wt\Ub_l \in \C^{l \times l}$)
\STATE $\wh\Ub_l = \Qb_\Xb \wt\Ub_l$
\end{algorithmic}   
\end{algorithm} 

As described in \Cref{algo:rsvd_power_iterations}, the randomized SVD provides a rank-$l$ ($l \ll \min(m,n)$) approximation of $\Ab \in \C^{m \times n}$ while {granting} provable acceleration to the truncated SVD {computation} -- $O\rbr{mnl(2q+1)}$ {when $\Omegab$ is chosen to be a Gaussian random matrix}\footnote{Asymptotically, there exist deterministic iterative algorithms for the truncated SVD (\eg, based on Lanczos iterations (\cite{trefethen1997} Algorithm 36.1)) that run in $O\rbr{mnl}$ time. However, compared with these inherently sequential iterative algorithms, the randomized SVD can be executed much more efficiently in practice, even with power iterations (\ie, $q>0$),  since the $O\rbr{mnl(2q+1)}$ computation bottleneck in \Cref{algo:rsvd_power_iterations} involves only matrix-matrix multiplications which are easily parallelizable and highly optimized.}. Such efficiency improvement is achieved by first {projecting} the high-dimensional row (column) space of $\Ab$ to a low-dimensional subspace via a Johnson-Lindenstrauss transform\footnote{Throughout this work, we focus on Gaussian random matrices (\Cref{algo:rsvd_power_iterations}, Line 1) in the sake of theoretical guarantees, \ie, $\Omegab$ being isotropic and rotationally invariant.} (JLT) $\Omegab$ (known as ``sketching''). Then, the SVD of the resulting column (row) sketch $\Xb = \Ab\Omegab$ can be evaluated efficiently in $O\rbr{ml^2}$ time, and the rank-$l$ approximation can be constructed accordingly.

The spectral decay in $\Ab$ has a significant impact on the accuracy of the resulting low-rank approximation from \Cref{algo:rsvd_power_iterations} (as suggested in 
% \rrr{\cite{halko2011finding} Theorem 10.7 and Theorem 10.8}
{\cite[Thm.~10.7, Thm.~10.8]{halko2011finding}}). To remediate the performance of \Cref{algo:rsvd_power_iterations} on matrices with flat spectra, power iterations (\Cref{algo:rsvd_power_iterations}, Line 2) {($q\geq 1$)} are usually incorporated to enhance the spectral decay. However, without proper orthogonalization, plain power iterations can be numerically unstable, especially for ill-conditioned matrices. For stable power iterations, starting with $\Xb = \Ab\Omegab \in \C^{m \times l}$ of full column rank (which holds almost surely for Gaussian random matrices), we incorporate orthogonalization in each 
power iteration 
{(or one can do so selectively~\cite[\S.~6.1]{stewart2})}
via the reduced unpivoted QR factorization (each with complexity $O(ml^2)$). Let $\ortho\rbr{\Xb} = \Qb_\Xb \in \C^{m \times l}$ be an orthonormal basis of $\Xb$ produced by the QR factorization. Then, the stable evaluation of $q$ power iterations (\Cref{algo:rsvd_power_iterations}, Line 2) can be expressed as:
\begin{align}\label{eq:stable_power_iter}
    \Xb^{(0)} \gets \ortho\rbr{\Ab\Omegab}, \quad
    \Xb^{(i)} \gets \ortho\rbr{\Ab \ortho\rbr{\Ab^* \Xb^{(i-1)}}} \ \forall\ i \in [q].
\end{align}
 
Notice that in \Cref{algo:rsvd_power_iterations}, with $\Xb = \Xb^{(q)}$, the approximated rank-$l$ SVD of $\Ab$ can be expressed as 
% \rr{(YN: displayed equation so we can refer to it later)}
\begin{equation}\label{eq:whab}
\wh\Ab_l = \wh\Ub_l \wh\Sigmab_l \wh\Vb_l^* = \Xb \Yb^*\quad  \mbox{where}\quad \Yb = \Xb^\dagger \Ab.  
\end{equation}
With $\wh\Ub_l$ and $\wh\Vb_l$ characterizing the approximated $l$-dimensional left and right leading singular subspaces, let $\wh\Ub_{m \setminus l} \in \C^{m \times (m-l)}$ and $\wh\Vb_{n \setminus l} \in \C^{n \times (n-l)}$ denote an arbitrary pair of their respective orthogonal complements. For any $1 \le k < l$, we further denote the partitions $\wh\Ub_l = \sbr{\wh\Ub_k, \wh\Ub_{l \setminus k}}$ and $\wh\Vb_l = \sbr{\wh\Vb_k, \wh\Vb_{l \setminus k}}$ where $\wh\Ub_k \in \C^{m \times k}$ and $\wh\Vb_k \in \C^{n \times k}$, respectively.

\subsection{Canonical Angles}

Now, we review the notion of canonical angles \cite{golub2013} that measure distances between two subspaces $\Ucal$, $\Vcal$ of an arbitrary Euclidean space $\C^d$.

\begin{definition}[Canonical angles, \cite{golub2013}]\label{def:canonical-angles}
Given two subspaces $\Ucal,\Vcal \subseteq \C^d$ with dimensions $\dim\rbr{\Ucal}=l$ and $\dim\rbr{\Vcal}=k$ (assuming $l \geq k$ without loss of generality), the canonical angles, denoted by $\angle\rbr{\Ucal,\Vcal}=\diag\rbr{\theta_1,\dots,\theta_k}$, consist of $k$ angles that measure the alignment between $\Ucal$ and $\Vcal$, defined recursively such that
\begin{align*}
    \ub_i, \vb_i ~\triangleq~
    &\argmax~\ub_i^*\vb_i \\
    \t{s.t.}~
    &\ub_i \in \rbr{\Ucal \setminus \spn\cbr{\ub_{\iota}}_{\iota=1}^{i-1}} \cap \SSS^{d-1},\\ 
    &\vb_i \in \rbr{\Vcal \setminus \spn\cbr{\vb_{\iota}}_{\iota=1}^{i-1}} \cap \SSS^{d-1}\\
    \cos \theta_i ~=~
    &\ub_i^* \vb_i \quad \forall~ i=1,\dots,k, \quad
    0 \leq \theta_1 \leq \dots \leq \theta_k \leq \pi/2.
\end{align*}
\end{definition}

For arbitrary full-rank matrices 
% \rrr{$\Ub \in \C^{d \times l}$ and $\Vb \in \C^{d \times k}$ }
{$\Mb \in \C^{d \times l}$ and $\Nb \in \C^{d \times k}$}
% \rr{YN: please check}
(assuming $k \leq l \leq d$ without loss of generality), let $\angle\rbr{\Mb,\Nb} \triangleq \angle\rbr{\spn(\Mb),\spn(\Nb)}$ denote the canonical angles between the corresponding spanning subspaces in $\C^d$. For each $i \in [k]$, let $\angle_i\rbr{\Mb,\Nb}$ be the $i$-th (smallest) canonical angle {defined as} $\cos\angle_i\rbr{\Mb,\Nb} = \sigma_i\rbr{\Qb_{\Mb}^*\Qb_{\Nb}}$ {such that} $\sin\angle_i\rbr{\Mb,\Nb} = \sigma_{k-i+1}\rbr{\rbr{\Ib-\Qb_{\Mb}\Qb_{\Mb}^*}\Qb_{\Nb}}$ (\cf \cite{bjorck1973numerical} Section 3).

With the unknown true rank-$k$ truncated SVD $\Ab_k = \Ub_k \Sigmab_k \Vb_k^*$ and an approximated rank-$l$ SVD $\wh\Ab_l = \wh\Ub_l \wh\Sigmab_l \wh\Vb_l^*$ from \Cref{algo:rsvd_power_iterations}, in this work, we mainly focus on prior and posterior guarantees for the canonical angles $\angle\rbr{\Ub_k,\wh\Ub_l}$ and $\angle\rbr{\Vb_k,\wh\Vb_l}$. Meanwhile, in \Cref{thm:separate_gap_bounds}, we present a set of posterior residual-based upper bounds for the canonical angles $\angle\rbr{\Ub_k,\wh\Ub_k}$ and $\angle\rbr{\Vb_k,\wh\Vb_k}$ as corollaries.

\section{Space-agnostic Bounds under Sufficient Oversampling}\label{sec:space_agnostic}

We start by pointing out the intuition that, under sufficient oversampling, with Gaussian random matrices whose distribution is orthogonally invariant, the alignment between the approximated and true subspaces are independent of the unknown true subspaces, \ie, the canonical angles are space-agnostic, as reflected in the following theorem.

\begin{theorem}\label{thm:space_agnostic_bounds}
For a rank-$l$ randomized SVD (\Cref{algo:rsvd_power_iterations}) with {a} Gaussian embedding {$\Omegab$} and $q \ge 0$ power iterations, when the oversampled rank $l$ satisfies $l = \Omega\rbr{k}$ (where $k$ is the target rank, $k < l < r = \rank(\Ab)$) and $q$ is reasonably small such that $\eta \dfeq \frac{\rbr{\sum_{j=k+1}^r \sigma_j^{4q+2}}^2}{\sum_{j=k+1}^r \sigma_j^{2(4q+2)}}$
\footnote{Notice that $1 < \eta \le r-k$. To the extremes, $\eta = r-k$ when the tail is flat $\sigma_{k+1}=\dots=\sigma_r$; while $\eta \to 1$ when $\sigma_{k+1} \gg \sigma_j$ for all $j=k+2,\dots,r$. In particular, with a relatively flat tail $\Sigmab_{r \setminus k}$ and a reasonably small $q$ (recall that $q=1,2$ is usually sufficient in practice), we have $\eta = \Theta\rbr{r-k}$, and the assumption can be simplified as $r-k=\Omega\rbr{l}$. Although exponential tail decay can lead to small $\eta$ and may render the assumption infeasible in theory, in practice, simply taking $r-k=\Omega\rbr{l}$, $l=\Omega\rbr{k}$, $\eps_1 = \sqrt{\frac{k}{l}}$ and $\eps_2=\sqrt{\frac{l}{r-k}}$ is sufficient to ensure the validity of upper bounds when $q \le 10$ even for matrices with rapid tail decay, as shown in \Cref{subsec:experiment_canonical_angle}.}
satisfies $\eta = \Omega\rbr{l}$, with high probability (at least $1-e^{-\Theta(k)}-e^{-\Theta(l)}$), there exist distortion factors $0< \eps_1, \eps_2 < 1$ such that
\begin{align}
    \label{eq:space_agnostic_left}
    &\sin\angle_i\rbr{\Ub_k, \wh\Ub_l} \le 
    \rbr{1+ \frac{1-\eps_1}{1+\eps_2} \cdot \frac{l}{\sum_{j=k+1}^r \sigma_j^{4q+2}} \cdot \sigma_i^{4q+2}}^{-\frac{1}{2}}
    \\
    \label{eq:space_agnostic_right}
    &\sin\angle_i\rbr{\Vb_k, \wh\Vb_l} \le 
    \rbr{1+ \frac{1-\eps_1}{1+\eps_2} \cdot \frac{l}{\sum_{j=k+1}^r \sigma_j^{4q+4}} \cdot \sigma_i^{4q+4}}^{-\frac{1}{2}}
\end{align}
for all $i \in [k]$, where $\eps_1 = \Theta\rbr{\sqrt{\frac{k}{l}}}$ and $\epsilon_2 = \Theta\rbr{\sqrt{\frac{l}{\eta}}}$.
Furthermore, both bounds are asymptotically tight:
\begin{align}
    \label{eq:space_agnostic_lower_left}
    &\sin\angle_i\rbr{\Ub_k, \wh\Ub_l} \ge \rbr{1+O\rbr{\frac{l \cdot \sigma_i^{4q+2}}{\sum_{j=k+1}^r \sigma_j^{4q+2}} }}^{-\frac{1}{2}}
    \\
    \label{eq:space_agnostic_lower_right}
    &\sin\angle_i\rbr{\Vb_k, \wh\Vb_l} \ge \rbr{1+O\rbr{\frac{l \cdot \sigma_i^{4q+4}}{\sum_{j=k+1}^r \sigma_j^{4q+4}} }}^{-\frac{1}{2}}
\end{align}
where $O\rbr{\cdot}$ suppresses the distortion factors $\frac{1+\eps_1}{1-\eps_2}$
\footnote{
    Despite the asymptotic tightness of \Cref{thm:space_agnostic_bounds} theoretically, in practice, we observe that the empirical validity of lower bounds is more restrictive on oversampling than that of upper bounds. In specific, the numerical observations in \Cref{subsec:experiment_canonical_angle} suggest that $l \ge 1.6 k$ is usually sufficient for the upper bounds to hold; whereas the empirical validity of lower bounds generally requires more aggressive oversampling of at least $l \ge 4 k$, also with slightly larger constants associated with $\eps_1$ and $\eps_2$, as demonstrated in \Cref{subapx:sup_exp_upper_lower_space_agnostic}.
}.
\end{theorem}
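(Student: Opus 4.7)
My plan is to leverage the rotational invariance of $\Omegab$ to reduce the problem to a concrete random-matrix calculation in coordinates aligned with the SVD of $\Ab$. Writing $\Ab = \Ub\Sigmab\Vb^*$, set $\tilde\Omegab := \Vb^*\Omegab$, which is equidistributed with $\Omegab$ by rotational invariance; then $\Xb^{(q)} = \Ub\Bb$ with $\Bb := \Sigmab^{2q+1}\tilde\Omegab \in \C^{r\times l}$. Since $\Ub$ is an isometry, $\angle_i(\Ub_k, \wh\Ub_l) = \angle_i(\Eb_k, \col\Bb)$ in $\C^r$ with $\Eb_k := \spn\{e_1,\dots,e_k\}$; the right-singular analog follows identically with the exponent $2q+1\mapsto 2q+2$, because an extra $\Ab^*$-multiplication sends $\col(\wh\Vb_l)$ to $\col(\Sigmab^{2q+2}\tilde\Omegab)$ in $\Vb$-coordinates.

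\textbf{Exact canonical-angle formula.} Partition $\Bb=[\Bb_1;\Bb_2]$ with $\Bb_1\in\C^{k\times l}$, take a unitary $\Qb\in\C^{l\times l}$ with $\Bb_1\Qb=[\Lb_1,\b{0}]$ (QR of $\Bb_1^*$), and write $\Bb_2\Qb=[\Mb_{21},\Mb_{22}]$. Computing $\Eb_k^*\Pb_{\col\Bb}\Eb_k$ by inverting the $2\times 2$ block structure of $(\Bb\Qb)^*(\Bb\Qb)$ via Schur complement yields the exact identity
\begin{equation*}
\sin^2\angle_i(\Eb_k,\col\Bb)=\frac{\sigma_{k-i+1}(\tilde\Zb)^2}{1+\sigma_{k-i+1}(\tilde\Zb)^2},\qquad \tilde\Zb:=\Pb^\perp\Zb,\quad \Zb:=\Bb_2\Bb_1^\dagger,
\end{equation*}
where $\Pb^\perp$ is the orthogonal projection onto $\col(\Mb_{22})^\perp$ in $\C^{r-k}$. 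Since $\sigma_j(\tilde\Zb)\le\sigma_j(\Zb)$ and $x\mapsto x^2/(1+x^2)$ is increasing, upper-bounding $\sigma_{k-i+1}(\Zb)$ already suffices for the upper canonical angle bound.

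\textbf{Concentration sandwich.} Substituting the Gaussian factorization yields $\Zb^*\Zb=\Sigmab_k^{-(2q+1)}(\tilde\Omegab_1^\dagger)^*\Sb'\,\tilde\Omegab_1^\dagger\,\Sigmab_k^{-(2q+1)}$ with $\Sb':=\tilde\Omegab_2^*\Sigmab_{r\setminus k}^{4q+2}\tilde\Omegab_2$. By independence of $\tilde\Omegab_1$ and $\tilde\Omegab_2$, I combine two concentration facts: (i) classical extreme-singular-value bounds give $(1-\eps_1)\Ib_k\preceq \tilde\Omegab_1\tilde\Omegab_1^*\preceq (1+\eps_1)\Ib_k$ with $\eps_1=\Theta(\sqrt{k/l})$ and failure probability $e^{-\Theta(k)}$; (ii) a sharp Gaussian-quadratic-form concentration (via non-commutative Khintchine / Hanson--Wright for weighted Wishart matrices) gives $(1-\eps_2)(\tau/l)\Ib_l\preceq \Sb'\preceq (1+\eps_2)(\tau/l)\Ib_l$ with $\tau:=\sum_{j>k}\sigma_j^{4q+2}$ and $\eps_2=\Theta(\sqrt{l/\eta})$, failure probability $e^{-\Theta(l)}$, the rate $\sqrt{l/\eta}$ reflecting the stable rank $\eta$ of $\Sigmab_{r\setminus k}^{4q+2}$. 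These combine to the PSD sandwich
\begin{equation*}
\tfrac{(1-\eps_2)\tau}{(1+\eps_1)\,l}\,\Sigmab_k^{-(4q+2)}\ \preceq\ \Zb^*\Zb\ \preceq\ \tfrac{(1+\eps_2)\tau}{(1-\eps_1)\,l}\,\Sigmab_k^{-(4q+2)}.
\end{equation*}
Since $\Sigmab_k^{-(4q+2)}$ is diagonal with entries $\sigma_i^{-(4q+2)}$ increasing in $i$, its $(k-i+1)$-th largest eigenvalue is $\sigma_i^{-(4q+2)}$, so $\sigma_{k-i+1}(\Zb)^2$ is sandwiched by $\frac{(1\mp\eps_2)\tau}{(1\pm\eps_1)\,l\,\sigma_i^{4q+2}}$; substituting into the Step-2 identity delivers \eqref{eq:space_agnostic_left}, and the analog with $2q+1\mapsto 2q+2$ yields \eqref{eq:space_agnostic_right}.

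\textbf{Main obstacle: the matching lower bound.} The asymptotic lower bounds \eqref{eq:space_agnostic_lower_left}--\eqref{eq:space_agnostic_lower_right} are the hardest piece because the exact formula involves $\sigma_{k-i+1}(\tilde\Zb)=\sigma_{k-i+1}(\Pb^\perp\Zb)$, and the rank-$(r-l)$ projection $\Pb^\perp$ strictly reduces singular values---so a PSD lower bound on $\Zb^*\Zb$ is not enough on its own. The key observation is that, conditionally on $\tilde\Omegab_1$, the Gaussian $\tilde\Omegab_2$ decomposes via its independent components along the row space of $\tilde\Omegab_1$ (which determines $\Zb$) and its orthogonal complement (which determines $\Mb_{22}$ and hence $\Pb^\perp$), so $\Zb$ and $\Pb^\perp$ are \emph{conditionally independent}. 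A two-sided concentration on the principal angle between $\col(\Zb)$ (dimension $k$) and $\col(\Mb_{22})$ (dimension $l-k$) in the regime $l=\Omega(k)$, $\eta=\Omega(l)$ then yields $\sigma_{k-i+1}(\tilde\Zb)^2\ge (1-o(1))\,\sigma_{k-i+1}(\Zb)^2$, and this $o(1)$ is absorbed into the $(1+\eps_1)/(1-\eps_2)$ constants hidden inside the $O(\cdot)$ of the lower bound.
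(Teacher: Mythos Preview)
Your setup, reduction, and concentration ingredients match the paper's, and your upper-bound argument is correct. The difference is in how the exact canonical-angle identity is packaged. Where you write the angle via $\tilde\Zb=\Pb^\perp\Zb$ and then drop $\Pb^\perp$ for the upper bound, the paper applies the Woodbury identity directly to $[\Ib_k,\b{0}](\Ib_r-\wt\Xb\wt\Xb^\dagger)[\Ib_k;\b{0}]$ and obtains
\[
\sin^2\angle_{k-i+1}(\Ub_k,\wh\Ub_l)
=\sigma_i\Bigl(\bigl(\Ib_k+\wt\Omegab_1(\wt\Omegab_2^*\wt\Omegab_2)^{-1}\wt\Omegab_1^*\bigr)^{-1}\Bigr),
\]
with $\wt\Omegab_1=\Bb_1$, $\wt\Omegab_2=\Bb_2$ in your notation, and then sandwiches $\wt\Omegab_1\wt\Omegab_1^*$ and $\wt\Omegab_2^*\wt\Omegab_2$ exactly as you do. Both routes give the same upper bound.

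The real divergence is in the lower bound, where you have manufactured an obstacle that does not exist in the paper's formulation. In the Woodbury form above, the \emph{same} two-sided concentration on $\wt\Omegab_2^*\wt\Omegab_2$ and $\wt\Omegab_1\wt\Omegab_1^*$ immediately yields the matching PSD inequality
\[
\bigl(\Ib_k+\wt\Omegab_1(\wt\Omegab_2^*\wt\Omegab_2)^{-1}\wt\Omegab_1^*\bigr)^{-1}
\ \succeq\
\Bigl(\Ib_k+\tfrac{1+\eps_1}{1-\eps_2}\cdot\tfrac{l}{\tau}\,\Sigmab_k^{4q+2}\Bigr)^{-1},
\]
and the lower bounds \eqref{eq:space_agnostic_lower_left}--\eqref{eq:space_agnostic_lower_right} follow with no additional work. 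Your $\Pb^\perp\Zb$ formulation breaks this symmetry because dropping $\Pb^\perp$ only goes one way. In fact, the two exact formulas are linked by the Schur-complement identity
\[
\tilde\Zb^*\tilde\Zb \;=\; \Zb^*\Pb^\perp\Zb \;=\; \bigl(\Bb_1(\Bb_2^*\Bb_2)^{-1}\Bb_1^*\bigr)^{-1},
\]
so once you recognize this, the lower bound is immediate and the machinery in your Step~4 is unnecessary. As written, that step is also not rigorous: the subspaces $\col(\Zb)$ and $\col(\Mb_{22})$ are both $\Sigmab_{r\setminus k}^{2q+1}$-weighted Gaussian subspaces, not uniformly random, so concentration of their principal angles is nontrivial, and lifting such an angle bound to the per-index statement $\sigma_{k-i+1}(\Pb^\perp\Zb)\ge(1-o(1))\,\sigma_{k-i+1}(\Zb)$ requires more than the sketch provides.
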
   

The main insights provided by \Cref{thm:space_agnostic_bounds} include:
\begin{enumerate*}[label=(\roman*)]
    \item improved statistical guarantees for canonical angles under sufficient oversampling (\ie, $l=\Omega\rbr{k}$), as discussed later in \Cref{remark:prior_bound_compare_exist},
    \item a clear view of the balance between oversampling and power iterations for random subspace approximations with given spectra, as instantiated in \Cref{subsec:balance_oversampling_power_iterations_example}, and
    \item affordable upper bounds that can be evaluated in $O(\rank\rbr{\Ab})$ time with access to the (true/estimated) spectra and hold in practice with only moderate oversampling (\eg, $l \ge 1.6k$), as shown in \Cref{subsec:experiment_canonical_angle}.
\end{enumerate*} 
{We also note that when the true singular values in \Cref{thm:space_agnostic_bounds} are unknown, they may be replaced by the approximated singular values $\wh\Sigmab_l$ from \Cref{algo:rsvd_power_iterations} in practice, \cf~\Cref{subsec:experiment_canonical_angle}.}

\begin{proof}[Proof of \Cref{thm:space_agnostic_bounds}]
We show the derivation of \Cref{eq:space_agnostic_left} for left canonical angles $\sin\angle_i\rbr{\Ub_k, \wh\Ub_l}$.
The derivation for $\sin\angle_i\rbr{\Vb_k, \wh\Vb_l}$ in \Cref{eq:space_agnostic_right} follows {directly} by replacing the exponent $4q+2$ in \Cref{eq:space_agnostic_left} with $4q+4$ in \Cref{eq:space_agnostic_right}. This slightly larger exponent comes from the additional half power iteration associated with $\wh\Vb_l$ in \Cref{algo:rsvd_power_iterations} (as discussed in \Cref{remark:compare_left_right}), an observation made also in~\cite{saibaba2018randomized}. 

For rank-$l$ randomized SVD with Gaussian embedding $\Omegab \in \C^{n \times l}$ and $q$ power iterations, we denote $\Omegab_1 \triangleq \Vb_{k}^* \Omegab$ and $\Omegab_2 \triangleq \Vb_{r \setminus k}^* \Omegab$, as well as their weighted correspondences $\wt\Omegab_1 \dfeq \Sigmab_k^{2q+1} \Omegab_1$ and $\wt\Omegab_2 \dfeq \Sigmab_{r \setminus k}^{2q+1} \Omegab_2$, such that
\begin{align*}
    \wt\Xb \triangleq \Ub^* \Xb = \Ub^* \rbr{\Ab \Ab^*}^q \Ab \Omegab = \bmat{\Sigmab_k^{2q+1} \Omegab_1 \\ \Sigmab_{r \setminus k}^{2q+1} \Omegab_2} = \bmat{\wt\Omegab_1 \\ \wt\Omegab_2}.
\end{align*}
Since $\spn\rbr{\Ub_k} \subseteq \spn\rbr{\Ub}$ and $\spn\rbr{\Xb} \subseteq \spn\rbr{\Ub}$, we have $\Ub \Ub^* \Ub_k = \Ub_k$ and $\Ub \Ub^* \Xb = \Xb$, respectively. Then for all $i \in [k]$, 
\begin{align*}
    \sin\angle_{k-i+1}\rbr{\Ub_k,\wh\Ub_l} 
    = &\sigma_i\rbr{\rbr{\Ib_m - \wh\Ub_l \wh\Ub_l^*} \Ub_k}
    \\
    \rbr{\Ub \Ub^* \Ub_k = \Ub_k}~
    = &\sigma_i\rbr{\rbr{\Ub \Ub^* - \wh\Ub_l \wh\Ub_l^*} \Ub_k}
    \\
    \rbr{\mbox{span}(\Ub_l)=\mbox{span}(\Xb)\  \mbox{by~\eqref{eq:whab}}}~
    = &\sigma_i\rbr{\rbr{\Ub \Ub^* - \Xb \Xb^{\pinv}} \Ub_k} 
    \\
    \rbr{\Ub \Ub^* \Xb = \Xb}~
    = &\sigma_i\rbr{\Ub\Ub^* \rbr{\Ib_m - \Xb \Xb^{\pinv}} \Ub \Ub^* \Ub_k} 
    \\
    \rbr{\t{$\Ub$ consists of orthonormal columns}}~
    = &\sigma_i\rbr{\Ub^* \rbr{\Ib_m - \Xb \Xb^{\pinv}} \Ub \Ub^* \Ub_k} 
    \\
    \rbr{\Ub^*\Ub=\Ib_r,~ \Xb^\pinv\Ub=\rbr{\Ub^*\Xb}^\pinv {=\wt\Xb^\pinv}}~
    = &\sigma_i\rbr{\rbr{\Ib_r - \wt\Xb \wt\Xb^{\pinv}} \bmat{\Ib_k \\ \b{0}} }.
\end{align*}
Since $\Xb$ is assumed to {have} full column rank, we have $\wt\Xb \wt\Xb^{\pinv} = \wt\Xb \rbr{\wt\Xb^* \wt\Xb}^{-1} \wt\Xb^*$ {(which is an orthogonal projection)}, and
\begin{align*}
    &\bmat{\Ib_k & \b0} \rbr{\Ib_r - \wt\Xb \wt\Xb^{\pinv}} \bmat{\Ib_k \\ \b0} 
    \\
    = &\bmat{\Ib_k & \b0} \rbr{\Ib_r - \bmat{\wt\Omegab_1 \\ \wt\Omegab_2} \rbr{\wt\Omegab_1^* \wt\Omegab_1 + \wt\Omegab_2^* \wt\Omegab_2}^{-1} \bmat{\wt\Omegab_1^* & \wt\Omegab_2^*}} \bmat{\Ib_k \\ \b0}
    \\
    = &\Ib_k - \wt\Omegab_1 \rbr{\wt\Omegab_1^* \wt\Omegab_1 + \wt\Omegab_2^* \wt\Omegab_2}^{-1} \wt\Omegab_1^*
    \\
    \rbr{\t{Woodbury identity}}~ = &\rbr{\Ib_k + \wt\Omegab_1 \rbr{\wt\Omegab_2^* \wt\Omegab_2}^{-1} {\wt\Omegab_1^*}}^{-1}.
\end{align*}    
% \rr{YN: please check last correction.}
Therefore for all $i \in [k]$,
\begin{align}\label{eq:pf_sin_Uk_whUl}
\begin{split}
    \sin^2\angle_{k-i+1}\rbr{\Ub_k,\wh\Ub_l}  
    = &\sigma_i \rbr{\bmat{\Ib_k & \b0} \rbr{\Ib_r - \wt\Xb \wt\Xb^{\pinv}} \bmat{\Ib_k \\ \b0}} \\
    = &\sigma_i\rbr{\rbr{\Ib_k + \wt\Omegab_1 \rbr{\wt\Omegab_2^* \wt\Omegab_2}^{-1} \wt\Omegab_1^*}^{-1}}.
\end{split}
\end{align}

% \rrr{Leveraging the orthonormality of $\Vb_k \perp \Vb_{r \setminus k}$, with the isotropic, rotationally invariant Gaussian embedding $\Omegab$,}
{By the orthogonal invariance of the distribution of Gaussian embeddings $\Omegab$ together with the orthonormality of $\Vb_k \perp \Vb_{r \setminus k}$, we see that}
$\Omegab_1 \sim P\rbr{\C^{k \times l}}$ and $\Omegab_2 \sim P\rbr{\C^{(r-k) \times l}}$ are independent Gaussian random matrices with the same entry-wise {$\iid$} distribution $\Ncal\rbr{0,l^{-1}}$ as $\Omegab$. 
Therefore by \Cref{lemma:sample_to_population_covariance}, when $l= \Omega(k)$, with high probability (at least $1-e^{-\Theta(k)}$),
\begin{align*}
    \rbr{1-\eps_1} \Sigmab_{k}^{4q+2} 
    \aleq 
    \wt\Omegab_1 \wt\Omegab_1^*
    \aleq
    \rbr{1+\eps_1} \Sigmab_{k}^{4q+2} 
\end{align*}
for some $\epsilon_1 = \Theta\rbr{\sqrt{\frac{k}{l}}}$. 

Analogously, when $r-k \ge \eta = \frac{\tr\rbr{\Sigmab_{r \setminus k}^{4q+2}}^2}{\tr\rbr{\Sigmab_{r \setminus k}^{2(4q+2)}}} = \Omega(l)$, with high probability (at least $1-e^{-\Theta(l)}$),
\begin{align*}
    \frac{1-\eps_2}{l}\tr\rbr{\Sigmab_{r \setminus k}^{4q+2}} \Ib_l 
    \aleq 
    \wt\Omegab_2^* \wt\Omegab_2
    \aleq
    \frac{1+\eps_2}{l}\tr\rbr{\Sigmab_{r \setminus k}^{4q+2}} \Ib_l
\end{align*}
for some $\epsilon_2 = \Theta\rbr{\sqrt{\frac{l}{\eta}}}$.

Therefore by {the} union bound, we have with high probability (at least $1-e^{-\Theta(k)}-e^{-\Theta(l)}$) that,
\begin{align*}
    \rbr{\Ib_k + \wt\Omegab_1 \rbr{\wt\Omegab_2^* \wt\Omegab_2}^{-1} \wt\Omegab_1^*}^{-1}
    \aleq
    \rbr{\Ib_k + \frac{1-\eps_1}{1+\eps_2} \cdot \frac{l}{\tr\rbr{\Sigmab_{r \setminus k}^{4q+2}}} \cdot \Sigmab_k^{4q+2}}^{-1},
\end{align*}    
which leads to \Cref{eq:space_agnostic_left}, while the tightness is implied by 
\begin{align*}
    \rbr{\Ib_k + \wt\Omegab_1 \rbr{\wt\Omegab_2^* \wt\Omegab_2}^{-1} \wt\Omegab_1^*}^{-1}
    \ageq
    \rbr{\Ib_k + \frac{1+\eps_1}{1-\eps_2} \cdot \frac{l}{\tr\rbr{\Sigmab_{r \setminus k}^{4q+2}}} \cdot \Sigmab_k^{4q+2}}^{-1}.
\end{align*}   

The proof of \Cref{eq:space_agnostic_right} follows analogously by replacing the exponents $2q+1$ and $4q+2$ with $2q+2$ and $4q+4$, respectively.
\end{proof} 

\begin{remark}[Comparison with existing probabilistic bounds]\label{remark:prior_bound_compare_exist}
    % \rr{YN: I felt this remark could be after the proof, but OK if you want to keep it here.}
    With access to the unknown right singular subspace $\Vb$, let $\Omegab_1 \triangleq \Vb_{k}^* \Omegab$ and $\Omegab_2 \triangleq \Vb_{r \setminus k}^* \Omegab$. Then, {Saibaba~\cite[Thm.~1]{saibaba2018randomized}}indicates that, for all $i \in [k]$,
    \begin{align}
        \label{eq:saibaba2018_thm1_left}
        &\sin\angle_i\rbr{\Ub_k, \wh\Ub_l} \le
        \rbr{1 + \frac{\sigma_i^{4q+2}}{\sigma_{k+1}^{4q+2} \nbr{\Omegab_2 \Omegab_1^\dagger}_2^2}}^{-\frac{1}{2}},
        \\
        \label{eq:saibaba2018_thm1_right}
        &\sin\angle_i\rbr{\Vb_k, \wh\Vb_l} \le 
        \rbr{1 + \frac{\sigma_i^{4q+4}}{\sigma_{k+1}^{4q+4} \nbr{\Omegab_2 \Omegab_1^\dagger}_2^2}}^{-\frac{1}{2}}.
    \end{align} 
    Further, leveraging existing results on concentration properties of the independent and isotropic Gaussian random matrices $\Omegab_1$ and $\Omegab_2$ (\eg, from the proof of \cite{halko2011finding} Theorem 10.8), \cite{saibaba2018randomized} shows that, when $l \ge k+2$, for any $\delta \in (0,1)$, with probability at least $1-\delta$,
    \begin{align*}
        \nbr{\Omegab_2 \Omegab_1^\dagger}_2 \le 
        \frac{e \sqrt{l}}{l-k+1} \rbr{\frac{2}{\delta}}^{\frac{1}{l-k+1}} \rbr{\sqrt{n-k}+\sqrt{l}+\sqrt{2 \log\frac{2}{\delta}}}.
    \end{align*}    
    
    Without loss of generality, we consider the bounds on $\sin\angle\rbr{\Ub_k,\wh\Ub_l}$. Comparing to the existing bound in \Cref{eq:saibaba2018_thm1_left}, under multiplicative oversampling ($l=\Omega(k)$, $r=\Omega(l)$), \Cref{eq:space_agnostic_left} in \Cref{thm:space_agnostic_bounds} captures the spectral decay on the tail by replacing the denominator term 
    \begin{align*}
        \sigma_{k+1}^{4q+2} \nbr{\Omegab_2 \Omegab_1^\dagger}_2^2
        \quad \text{with} \quad
        \frac{1+\eps_2}{1-\eps_1} \cdot \frac{1}{l}\sum_{j=k+1}^r \sigma_j^{4q+2} = \Theta\rbr{\frac{1}{l}\sum_{j=k+1}^r \sigma_j^{4q+2}}.
    \end{align*}
    We observe that $\frac{1}{l}\sum_{j=k+1}^r \sigma_j^{4q+2} \le \frac{r-k}{l} \sigma_{k+1}^{4q+2}$; while \Cref{lemma:sample_to_population_covariance} implies that, for independent Gaussian random matrices $\Omegab_1 \sim P\rbr{\C^{k \times l}}$ and $\Omegab_2 \sim P\rbr{\C^{(r-k) \times l}}$ with \iid entries from $\Ncal\rbr{0,l^{-1}}$,
    \begin{align*}
        \E\sbr{\nbr{\Omegab_2 \Omegab_1^\dagger}_2^2} = \E_{\Omegab_1}\sbr{\nbr{\rbr{\Omegab_1^\pinv}^* \E_{\Omegab_2}\sbr{\Omegab_2^* \Omegab_2} \Omegab_1^\dagger}_2} = \frac{r-k}{l} \E_{\Omegab_1}\sbr{\nbr{\rbr{\Omegab_1 \Omegab_1^*}^\dagger}_2}.
    \end{align*}
    With non-negligible spectral decay on {the} tail such that $\frac{1}{l}\sum_{j=k+1}^r \sigma_j^{4q+2} \ll \frac{r-k}{l} \sigma_{k+1}^{4q+2}$, when $\frac{1+\eps_2}{1-\eps_1} \cdot \frac{1}{l}\sum_{j=k+1}^r \sigma_j^{4q+2} \ll \sigma_{k+1}^{4q+2} \nbr{\Omegab_2 \Omegab_1^\dagger}_2^2$, \Cref{eq:space_agnostic_left} provides {a} tighter statistical guarantee than \Cref{eq:saibaba2018_thm1_left}, which is also confirmed by numerical observations in \Cref{subsec:experiment_canonical_angle}.

    From the derivation perspective, such improvement is achieved by taking an integrated view on the concentration of $\Sigmab_{r \setminus k}^{2q+1}\Omegab_2$ (commonly used in analyzing randomized low-rank approximation error~\cite{halko2011finding,gittens2013revisiting,frangella2023randomized,tropp2023randomized}), instead of considering the spectrum and the unknown singular subspace separately.
\end{remark}

\section{Unbiased Space-agnostic Estimates}\label{sec:space_agnostic_estimation}

A natural corollary from the proof of \Cref{thm:separate_gap_bounds} is unbiased estimates for the canonical angles that hold for arbitrary oversampling (\ie, for all $l \ge k$). Further, we will subsequently {see} in \Cref{sec:experiments} that such unbiased estimates also enjoy good empirical concentration.

\begin{proposition}\label{prop:space_agnostic_estimation}
For a rank-$l$ randomized SVD (\Cref{algo:rsvd_power_iterations}) with the Gaussian embedding $\Omegab \sim P\rbr{\C^{n \times l}}$ such that $\Omega_{ij} \sim \Ncal\rbr{0,l^{-1}}~\iid$ and $q \ge 0$ power iterations, for all $i \in [k]$,
\begin{align}\label{eq:space_agnostic_estimation_left}
    \E_{\Omegab} \sbr{\sin\angle_i\rbr{\Ub_k,\wh\Ub_l}}
    = &\E_{\Omegab_1', \Omegab_2'} \sbr{\sigma_i^{-\frac{1}{2}} \rbr{\Ib_k + \Sigmab_k^{2q+1} \Omegab'_1 \rbr{\Omegab_2^{'*} \Sigmab_{r \setminus k}^{4q+2} \Omegab'_2}^{-1} \Omegab_1^{'*} \Sigmab_k^{2q+1}}},
\end{align}
and analogously,
\begin{align}\label{eq:space_agnostic_estimation_right}
    \E_{\Omegab} \sbr{\sin\angle_i\rbr{\Vb_k,\wh\Vb_l}}
    = &\E_{\Omegab_1', \Omegab_2'} \sbr{\sigma_i^{-\frac{1}{2}} \rbr{\Ib_k + \Sigmab_k^{2q+2} \Omegab'_1 \rbr{\Omegab_2^{'*} \Sigmab_{r \setminus k}^{4q+4} \Omegab'_2}^{-1} \Omegab_1^{'*} \Sigmab_k^{2q+2}}},
\end{align}
where $\Omegab'_1 \sim P\rbr{\C^{k \times l}}$ and $\Omegab'_2 \sim P\rbr{\C^{(r-k) \times l}}$ are independent Gaussian random matrices with $\iid$ entries drawn from $\Ncal\rbr{0,l^{-1}}$.
\end{proposition}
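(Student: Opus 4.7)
The plan is to recycle the pathwise identity derived inside the proof of \Cref{thm:space_agnostic_bounds} and lift it to an expectation via Gaussian rotational invariance; no new concentration argument is required. First, I would revisit the Woodbury computation in that proof, which establishes, for every realization of $\Omegab$ with $\Xb$ of full column rank (almost sure for Gaussian $\Omegab$), the exact identity
\begin{align*}
    \sin^2\angle_{k-i+1}\rbr{\Ub_k,\wh\Ub_l} = \sigma_i\rbr{\rbr{\Ib_k + \wt\Omegab_1\rbr{\wt\Omegab_2^*\wt\Omegab_2}^{-1}\wt\Omegab_1^*}^{-1}}
\end{align*}
with $\Omegab_1 \dfeq \Vb_k^*\Omegab$, $\Omegab_2 \dfeq \Vb_{r\setminus k}^*\Omegab$, $\wt\Omegab_1 = \Sigmab_k^{2q+1}\Omegab_1$, and $\wt\Omegab_2 = \Sigmab_{r\setminus k}^{2q+1}\Omegab_2$. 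Writing $\Mb$ for the Hermitian positive definite matrix inside, we have $\sigma_i\rbr{\Mb^{-1}} = \sigma_{k-i+1}\rbr{\Mb}^{-1}$, and after the index flip $j \leftarrow k-i+1$ the identity rearranges into $\sin\angle_j\rbr{\Ub_k,\wh\Ub_l} = \sigma_j^{-1/2}(\Mb)$, which is precisely the random variable appearing inside the expectation on the right-hand side of \Cref{eq:space_agnostic_estimation_left}.

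The second step removes the unknown right singular basis $\Vb$ from the formula. Since the entries of $\Omegab$ are \iid\ $\Ncal\rbr{0,l^{-1}}$ and the columns of $\sbr{\Vb_k,\Vb_{r\setminus k}}$ are orthonormal in $\C^n$, the stacked matrix $\Vb^*\Omegab$ is entry-wise \iid\ $\Ncal\rbr{0,l^{-1}}$, so its row blocks $\Omegab_1 \in \C^{k \times l}$ and $\Omegab_2 \in \C^{(r-k) \times l}$ are independent Gaussian matrices whose joint law is identical to that of $\rbr{\Omegab'_1,\Omegab'_2}$ in the statement. Taking $\E_{\Omegab}\sbr{\cdot}$ of the pathwise identity and invoking this distributional equality immediately yields \Cref{eq:space_agnostic_estimation_left}.

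The right-singular counterpart \Cref{eq:space_agnostic_estimation_right} then follows from exactly the same argument applied to $\wh\Vb_l$, once one observes that $\col\rbr{\wh\Vb_l} = \col\rbr{\Ab^*\Qb_\Xb} = \col\rbr{\rbr{\Ab^*\Ab}^{q+1}\Omegab} = \col\rbr{\Vb\Sigmab^{2q+2}\Vb^*\Omegab}$, so the Woodbury derivation runs verbatim with exponent $2q+2$ replacing $2q+1$ (hence $4q+4$ replacing $4q+2$). I do not anticipate any genuine difficulty; the main care-point is purely bookkeeping -- keeping the index reversal $i \leftrightarrow k-i+1$ consistent so that the outer exponent $-1/2$ is attached to the correct singular value, and verifying that the Gram matrix $\Omegab_2^{'*}\Sigmab_{r\setminus k}^{4q+2}\Omegab'_2$ inside the expectation is almost surely invertible (which holds whenever $r-k \ge l$). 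Because only an expectation, not a tail bound, is taken, the resulting identity is exact and holds for every $l \ge k$ without the distortion factors $\epsilon_1,\epsilon_2$ that appear in \Cref{thm:space_agnostic_bounds}.
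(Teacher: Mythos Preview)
Your proposal is correct and follows essentially the same approach as the paper: recall the pathwise Woodbury identity from the proof of \Cref{thm:space_agnostic_bounds}, perform the index flip $\sigma_i(\Mb^{-1})=\sigma_{k-i+1}(\Mb)^{-1}$, and then invoke the rotational invariance of the Gaussian embedding to replace $(\Omegab_1,\Omegab_2)$ by independent $(\Omegab'_1,\Omegab'_2)$ under the expectation. The paper's proof does exactly these three steps in the same order, and likewise handles \Cref{eq:space_agnostic_estimation_right} by the exponent replacement $2q+1\to 2q+2$.
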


To calculate the unbiased estimate, 
{for a modest integer $N$}
we draw a set of independent Gaussian random matrices $\csepp{\Omegab^{(j)}_1 \sim P\rbr{\C^{k \times l}}, \Omegab^{(j)}_2 \sim P\rbr{\C^{(r-k) \times l}}}{j \in [N]}$ and evaluate
\begin{align*}
    &\sin\angle_i\rbr{\Ub_k,\wh\Ub_l} \approx \alpha_i = \frac{1}{N} \sum_{j=1}^N \rbr{1 + \sigma_i^2\rbr{\Sigmab_k^{2q+1} \Omegab^{(j)}_1 \rbr{\Sigmab_{r \setminus k}^{2q+1} \Omegab^{(j)}_2}^\dagger}}^{-\frac{1}{2}},
    \\
    &\sin\angle_i\rbr{\Vb_k,\wh\Vb_l} \approx \beta_i = \frac{1}{N} \sum_{j=1}^N \rbr{1 + \sigma_i^2\rbr{\Sigmab_k^{2q+2} \Omegab^{(j)}_1 \rbr{\Sigmab_{r \setminus k}^{2q+2} \Omegab^{(j)}_2}^\dagger}}^{-\frac{1}{2}},
\end{align*}
for all $i \in [k]$, which can be conducted efficiently in $O\rbr{Nrl^2}$ time. \Cref{algo:unbiased_canonical_angle_estimators} demonstrates the construction of unbiased estimates for $\E \sbr{\sin\angle_i\rbr{\Ub_k,\wh\Ub_l}}=\alpha_i$; while the unbiased estimates for $\E \sbr{\sin\angle_i\rbr{\Vb_k,\wh\Vb_l}}=\beta_i$ can be evaluated analogously by replacing Line 4 with $\wt\Omegab^{(j)}_1 = \Sigmab_k^{2q+2} \Omegab^{(j)}(1:k,:)$, $\wt\Omegab^{(j)}_2 = \Sigmab_{r \setminus k}^{2q+2} \Omegab^{(j)}(k+1:r,:)$. 

\begin{algorithm}
\caption{Unbiased canonical angle estimates}
\label{algo:unbiased_canonical_angle_estimators}
\begin{algorithmic}[1]
\REQUIRE {(Exact or estimated) singular values} $\Sigmab$, rank $k$, sample size $l \ge k$, number of power iterations $q$, number of trials $N$
\ENSURE Unbiased estimates $\E \sbr{\sin\angle_i\rbr{\Ub_k,\wh\Ub_l}}=\alpha_i$ for all $i \in [k]$

\STATE Partition $\Sigmab$ into $\Sigmab_k = \Sigmab(1:k,1:k)$ and $\Sigmab_{r \setminus k} = \Sigmab(k+1:r,k+1:r)$
\FOR{$j=1,\dots,N$}
    \STATE Draw $\Omegab^{(j)} \sim P\rbr{\C^{r \times l}}$ such that $\Omega^{(j)}_{ij} \sim \Ncal\rbr{0,l^{-1}}~\iid$ 
    \STATE $\wt\Omegab^{(j)}_1 = \Sigmab_k^{2q+1} \Omegab^{(j)}(1:k,:)$, $\wt\Omegab^{(j)}_2 = \Sigmab_{r \setminus k}^{2q+1} \Omegab^{(j)}(k+1:r,:)$
    \STATE $\sbr{\Ub_{\wt\Omegab^{(j)}_2}, \Sigmab_{\wt\Omegab^{(j)}_2}, \Vb_{\wt\Omegab^{(j)}_2}} = \t{svd}\rbr{\wt\Omegab^{(j)}_2, \t{``econ''}}$
    \STATE $\nub^{(j)} = \t{svd}\rbr{\wt\Omegab^{(j)}_1 \Vb_{\wt\Omegab^{(j)}_2} \Sigmab_{\wt\Omegab^{(j)}_2}^{-1} \Ub_{\wt\Omegab^{(j)}_2}^*}$
    \STATE $\theta^{(j)}_i = 1/\sqrt{1 + \rbr{\nu^{(j)}_i}^2}$ for all $i \in [k]$
\ENDFOR
\STATE $\alpha_i = \frac{1}{N} \sum_{j=1}^N \theta_i^{(j)}$ for all $i \in [k]$
\end{algorithmic}       
\end{algorithm} 

Experiments in \Cref{sec:experiments} show that the unbiased estimates concentrate well in practice: a sample size as small as $N=3$ is {seen to be} sufficient to provide good estimates. Further, with independent Gaussian random matrices, the unbiased estimates in \Cref{prop:space_agnostic_estimation} are space-agnostic, \ie, \Cref{eq:space_agnostic_estimation_left} and \Cref{eq:space_agnostic_estimation_right} only depend on the spectrum $\Sigmab$ but not on the unknown true singular subspaces $\Ub$ and $\Vb$. As \Cref{thm:space_agnostic_bounds}, the true singular values may be replaced by their approximations from \Cref{algo:rsvd_power_iterations} (\cf~\Cref{subsec:experiment_canonical_angle}).

\begin{proof}[Proof of \Cref{prop:space_agnostic_estimation}]
To show \Cref{eq:space_agnostic_estimation_left}, we recall from the proof of \Cref{thm:space_agnostic_bounds} that, for the rank-$l$ randomized SVD with a Gaussian embedding $\Omegab \sim P\rbr{\C^{n \times l}}$ and $q$ power iterations, $\Omegab_1 \dfeq \Vb_{k}^* \Omegab$ and $\Omegab_2 \dfeq \Vb_{r \setminus k}^* \Omegab$ are independent Gaussian random matrices with the same entry-wise distribution as $\Omegab$. Therefore, with $\rank\rbr{\Ab}=r$, for all $i \in [k]$,
\begin{align*}
    &\E_{\Omegab} \sbr{\sin\angle_{i}\rbr{\Ub_k,\wh\Ub_l}} 
    \quad \t{(Recall \Cref{eq:pf_sin_Uk_whUl})}
    \\
    = &\E_{\Omegab} \sbr{\sigma_{k-i+1}^{\frac{1}{2}}\rbr{\rbr{\Ib_k + \Sigmab_k^{2q+1} \Omegab_1 \rbr{\Omegab_2^* \Sigmab_{r \setminus k}^{4q+2} \Omegab_2}^{-1} \Omegab_1^* \Sigmab_k^{2q+1}}^{-1}}} 
    \\
    = &\E_{\Omegab} \sbr{\sigma_i^{-\frac{1}{2}}\rbr{\Ib_k + \Sigmab_k^{2q+1} \Omegab_1 \rbr{\Omegab_2^* \Sigmab_{r \setminus k}^{4q+2} \Omegab_2}^{-1} \Omegab_1^* \Sigmab_k^{2q+1}}}
    \\
    = &\underset{\Omegab_1', \Omegab_2'}{\E} \sbr{\sigma_i^{-\frac{1}{2}}\rbr{\Ib_k + \Sigmab_k^{2q+1} \Omegab'_1 \rbr{\Omegab_2^{'*} \Sigmab_{r \setminus k}^{4q+2} \Omegab'_2}^{-1} \Omegab_1^{'*} \Sigmab_k^{2q+1}}}.
\end{align*}
% \rr{YN: please can you detail how the first equality was obtained, probably referring to equations in the proof of Thm 1.}
The unbiased estimate in \Cref{eq:space_agnostic_estimation_right} follows analogously.
\end{proof}

As a side note, we point out that, compared with the probabilistic upper bounds \Cref{eq:space_agnostic_left} and \Cref{eq:space_agnostic_right}, the estimates \Cref{eq:space_agnostic_estimation_left} and \Cref{eq:space_agnostic_estimation_right} circumvent overestimation from the operator-convexity of inversion $\sigma \to \sigma^{-1}$,
\begin{align*}
    \E_{\Omegab_2'} \sbr{\rbr{\Omegab_2^{'*} \Sigmab_{r \setminus k}^{4q+2} \Omegab'_2}^{-1}} \ageq 
    \rbr{\E_{\Omegab_2'} \sbr{\Omegab_2^{'*} \Sigmab_{r \setminus k}^{4q+2} \Omegab'_2}}^{-1},
\end{align*}
which implies that
\begin{align*}
    &\E_{\Omegab_1', \Omegab_2'} \sbr{ \rbr{\Ib_k + \Sigmab_k^{2q+1} \Omegab'_1 \rbr{\Omegab_2^{'*} \Sigmab_{r \setminus k}^{4q+2} \Omegab'_2}^{-1} \Omegab_1^{'*} \Sigmab_k^{2q+1}}^{-1} }
    \\
    &\ageq \E_{\Omegab_1'} \sbr{ \rbr{\Ib_k + \Sigmab_k^{2q+1} \Omegab'_1 \rbr{\E_{\Omegab_2'} \sbr{\Omegab_2^{'*} \Sigmab_{r \setminus k}^{4q+2} \Omegab'_2}}^{-1} \Omegab_1^{'*} \Sigmab_k^{2q+1}}^{-1} }.
\end{align*}

\section{Posterior Residual-based Bounds}\label{sec:separate_spec_bounds}

In addition to the prior probabilistic bounds and unbiased estimates, in this section, we introduce two sets of posterior guarantees for the canonical angles that hold deterministically and can be evaluated/approximated efficiently based on the residual of the resulting low-rank approximation $\wh\Ab_l = \wh\Ub_l \wh\Sigmab_l \wh\Vb_l^*$ from \Cref{algo:rsvd_power_iterations}.

\begin{remark}[Generality of residual-based bounds]\label{remark:residula_based_generality}
    It is worth highlighting that both the statements and the proofs of the posterior guarantees \Cref{thm:with_oversmp_computable_det,thm:separate_gap_bounds} to be presented are algorithm-independent. 
    In contrast to \Cref{thm:space_agnostic_bounds} and \Cref{prop:space_agnostic_estimation} whose derivation depends explicitly on the algorithm (\eg, assuming $\Omegab$ being Gaussian in \Cref{algo:rsvd_power_iterations}), the residual-based bounds in \Cref{thm:with_oversmp_computable_det,thm:separate_gap_bounds} hold for general {approximated low-rank SVDs,} $\Ab \approx \wh\Ub_l \wh\Sigmab_l\wh\Vb_l^T$.
\end{remark}   

We start with the following proposition that establishes relations between the canonical angles, the residuals, and the true spectrum $\sigma\rbr{\Ab}$.

\begin{theorem}\label{thm:with_oversmp_computable_det} 
Given any $\wh\Ub_l \in \C^{m \times l}$ and $\wh\Vb_l \in \C^{n \times l}$ with orthonormal columns such that $\range\rbr{\wh\Ub_l} \subseteq \col\rbr{\Ab}$ and $\range\rbr{\wh\Vb_l} \subseteq \row\rbr{\Ab}$, we have for each $i=1,\dots,k$ ($k \le l$),
\begin{align}\label{eq:residual_based_posterior_left}
    &\sin\angle_i\rbr{\Ub_k, \wh\Ub_l} 
    \leq
    \min \cbr{ 
    \frac{\sigma_{k-i+1}\rbr{\rbr{\Ib_m-\wh\Ub_l \wh\Ub_l^*}\Ab}}{\sigma_k},
    \frac{\sigma_1\rbr{\rbr{\Ib_m-\wh\Ub_l \wh\Ub_l^*}\Ab}}{\sigma_i} 
    },
\end{align}
while
\begin{align}\label{eq:residual_based_posterior_right}
    &\sin\angle_i\rbr{\Vb_k, \wh\Vb_l} 
    \leq
    \min \cbr{ 
    \frac{\sigma_{k-i+1}\rbr{\Ab \rbr{\Ib_n-\wh\Vb_l \wh\Vb_l^*}}}{\sigma_k},
    \frac{\sigma_1\rbr{\Ab \rbr{\Ib_n-\wh\Vb_l \wh\Vb_l^*}}}{\sigma_i} 
    }.
\end{align}
\end{theorem}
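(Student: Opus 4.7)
The plan is to reduce \eqref{eq:residual_based_posterior_left} and \eqref{eq:residual_based_posterior_right} to two elementary multiplicative singular-value inequalities applied to a single factorization that places the residual matrix next to a matrix with explicitly known singular values.

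First, I would start from the standard identity stated after Definition~\ref{def:canonical-angles},
\begin{align*}
    \sin\angle_i\rbr{\Ub_k, \wh\Ub_l} = \sigma_{k-i+1}\rbr{\rbr{\Ib_m - \wh\Ub_l \wh\Ub_l^*} \Ub_k},
\end{align*}
and rewrite $\Ub_k = \Ab \Vb_k \Sigmab_k^{-1}$, which is valid since $\Ab\Vb_k = \Ub_k\Sigmab_k$ and $\Sigmab_k$ is invertible (the top-$k$ singular values are positive, as $k \le l < r$). This yields the factorization
\begin{align*}
    \rbr{\Ib_m - \wh\Ub_l\wh\Ub_l^*}\Ub_k
    = \bspar{\rbr{\Ib_m - \wh\Ub_l\wh\Ub_l^*}\Ab}\bspar{\Vb_k\Sigmab_k^{-1}},
\end{align*}
which exposes the residual $\rbr{\Ib_m - \wh\Ub_l\wh\Ub_l^*}\Ab$ explicitly next to a matrix whose singular values are simply $1/\sigma_1 \le \cdots \le 1/\sigma_k$.

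Next I would apply the two standard multiplicative bounds $\sigma_j(MN) \le \sigma_j(M)\nbr{N}_2$ and $\sigma_j(MN) \le \nbr{M}_2\sigma_j(N)$ to this product. The first inequality, together with $\nbr{\Vb_k\Sigmab_k^{-1}}_2 = 1/\sigma_k$, produces the left-hand term in the minimum of \eqref{eq:residual_based_posterior_left}. The second produces the right-hand term after noting that the singular values of $\Vb_k\Sigmab_k^{-1}$ appear in reverse order of those of $\Sigmab_k$, so that the $(k{-}i{+}1)$-th largest equals $1/\sigma_i$. Taking the minimum of the two yields \eqref{eq:residual_based_posterior_left}.

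Finally, for the right-subspace bound \eqref{eq:residual_based_posterior_right} I would run the identical argument with the roles of $\Ub$ and $\Vb$ swapped, substituting $\Vb_k = \Ab^*\Ub_k\Sigmab_k^{-1}$ and using $\sigma_j\rbr{\rbr{\Ib_n - \wh\Vb_l\wh\Vb_l^*}\Ab^*} = \sigma_j\rbr{\Ab\rbr{\Ib_n - \wh\Vb_l\wh\Vb_l^*}}$ to put the residual into the stated form. I do not anticipate a genuine obstacle: the only subtlety is the index bookkeeping under the sine convention of Definition~\ref{def:canonical-angles} and the reversal of order of $1/\sigma_j$ relative to $\sigma_j$. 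The range-inclusion hypotheses $\range(\wh\Ub_l) \subseteq \col(\Ab)$ and $\range(\wh\Vb_l) \subseteq \row(\Ab)$ are natural for outputs of Algorithm~\ref{algo:rsvd_power_iterations} but do not appear to be required by the derivation above; I would verify whether the authors invoke them to sharpen the statement or merely to situate the result within the algorithmic context.
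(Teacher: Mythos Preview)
Your proof is correct and essentially mirrors the paper's: both express $\Ub_k = \Ab\Vb_k\Sigmab_k^{-1}$ and then apply the two multiplicative singular-value inequalities, the only cosmetic difference being that the paper groups the product as $[(\Ib_m - \wh\Ub_l\wh\Ub_l^*)\Ab\Vb_k]\,\Sigmab_k^{-1}$ and afterwards invokes Cauchy interlacing (Lemma~\ref{lemma:Cauchy_interlacing_theorem}) to drop the $\Vb_k$, whereas your grouping $[(\Ib_m - \wh\Ub_l\wh\Ub_l^*)\Ab]\,[\Vb_k\Sigmab_k^{-1}]$ sidesteps that step. Your observation about the range-inclusion hypotheses is also correct: the paper's proof never uses them.
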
   
It is worth highlighting that \Cref{eq:residual_based_posterior_left,eq:residual_based_posterior_right} are independent of the unknown true singular subspaces $\Ub_k$ and $\Vb_k$, but only depend on the singular values $\Sigmab_k$ (which may be replaced with the approximations $\wh\Sigmab_l$ from \Cref{algo:rsvd_power_iterations}, \cf~\Cref{subsec:experiment_canonical_angle}) and approximated singular subspaces $\wh\Ub_l$ and $\wh\Vb_l$. Therefore, these upper bounds can be evaluated/approximated efficiently based on the residual of the resulting low-rank approximation $\wh\Ab_l = \wh\Ub_l \wh\Sigmab_l \wh\Vb_l^*$.

\begin{remark}[Left versus right singular subspaces]\label{remark:compare_left_right}
% \rr{YN: this remark was made already at the beginning of Thm 1's proof. Perhaps combine into one.}
When $\wh\Ub_l$ and $\wh\Vb_l$ consist of approximated left and right singular vectors from \Cref{algo:rsvd_power_iterations}, upper bounds on $\sin\angle_i\rbr{\Vb_k, \wh\Vb_l}$ tend to be smaller than those on $\sin\angle_i\rbr{\Ub_k, \wh\Ub_l}$. This is induced by the algorithmic fact that, in \Cref{algo:rsvd_power_iterations}, $\wh\Vb_l$ is an orthonormal basis of $\Ab^*\Qb_\Xb$, while $\wh\Ub_l$ and $\Qb_\Xb$ are orthonormal bases of $\Xb = \Ab\Omegab$. That is, the evaluation of $\wh\Vb_l$ is enhanced by an additional half power iteration compared with that of $\wh\Ub_l$, which is also reflected by the differences in exponents on $\Sigmab$ (\ie, $2q+1$ versus $2q+2$) in \Cref{thm:space_agnostic_bounds} and \Cref{prop:space_agnostic_estimation}. 
{This difference can be important especially when $q$ is small (\eg, $q=0$). When higher accuracy in the left singular subspace is desirable, one can work with $\Ab^*$.}
\end{remark} 

\begin{proof}[Proof of \Cref{thm:with_oversmp_computable_det}]
Starting with the leading left singular subspace, by definition, for each $i=1,\dots,k$, we have
\begin{align*}
    \sin\angle_i\rbr{\Ub_k, \wh\Ub_l} 
    = &\sigma_{k-i+1}\rbr{\rbr{\Ib_m-\wh\Ub_l \wh\Ub_l^*}\Ub_k}
    \\
    =& \sigma_{k-i+1}\rbr{\rbr{\Ib_m-\wh\Ub_l \wh\Ub_l^*} \Ab \rbr{ \Vb \Sigmab^{-1} \Ub^* \Ub_k } }
    \\
    =& \sigma_{k-i+1}\rbr{ \rbr{\rbr{\Ib_m-\wh\Ub_l \wh\Ub_l^*} \Ab \Vb_k} \Sigmab_k^{-1}}.
\end{align*}
Then, we observe that the following holds simultaneously, 
\begin{align*}
    &\sigma_{k-i+1}\rbr{ \rbr{\rbr{\Ib_m - \wh\Ub_l \wh\Ub_l^*} \Ab \Vb_k} \Sigmab_k^{-1}} \leq \sigma_1\rbr{\rbr{\Ib_m - \wh\Ub_l \wh\Ub_l^*} \Ab \Vb_k} \cdot \sigma_{k-i+1}\rbr{\Sigmab_k^{-1}},
    \\
    &\sigma_{k-i+1}\rbr{ \rbr{\rbr{\Ib_m - \wh\Ub_l \wh\Ub_l^*} \Ab \Vb_k} \Sigmab_k^{-1}} \leq \sigma_{k-i+1}\rbr{\rbr{\Ib_m - \wh\Ub_l \wh\Ub_l^*} \Ab \Vb_k} \cdot \sigma_1\rbr{\Sigmab_k^{-1}},
\end{align*}
where $\sigma_{k-i+1}\rbr{\Sigmab_k^{-1}} = 1/\sigma_i$ and $\sigma_{1}\rbr{\Sigmab_k^{-1}} = 1/\sigma_k$. Finally by \Cref{lemma:Cauchy_interlacing_theorem}, we have
\begin{align*}
    \sigma_{k-i+1}\rbr{\rbr{\Ib_m - \wh\Ub_l \wh\Ub_l^*} \Ab \Vb_k}
    \le 
    \sigma_{k-i+1}\rbr{\rbr{\Ib_m - \wh\Ub_l \wh\Ub_l^*} \Ab}.
\end{align*}

Meanwhile, the upper bound for the leading right singular subspace can be derived analogously by observing that
\begin{align*}
    \sin\angle_i\rbr{\Vb_k, \wh\Vb_l} 
    =&\sigma_{k-i+1}\rbr{ \Vb_k^* \rbr{\Ib_n - \wh\Vb_l\wh\Vb_l^*} } 
    = \sigma_{k-i+1}\rbr{\Sigmab_k^{-1} \Ub_k^* \Ab \rbr{\Ib_n - \wh\Vb_l\wh\Vb_l^*} }.
\end{align*}
\end{proof}

As a potential drawback, although the residuals $\rbr{\Ib_m - \wh\Ub_l \wh\Ub_l^*}\Ab$ and $\Ab\rbr{\Ib_n - \wh\Vb_l \wh\Vb_l^*}$ in \Cref{thm:with_oversmp_computable_det} can be evaluated efficiently in $O(mn)$ and $O(mnl)$ time\footnote{
    For the $O(mn)$ complexity of computing $\rbr{\Ib_m - \wh\Ub_l \wh\Ub_l^*}\Ab$, we assume that $\wh\Ab_l = \wh\Ub_l \wh\Ub_l^* \Ab = \wh\Ub_l \wh\Sigmab_l \wh\Vb_l^*$ is readily available from \Cref{algo:rsvd_power_iterations}. Otherwise (\eg, when \Cref{algo:rsvd_power_iterations} returns $\rbr{\wh\Ub_l, \wh\Sigmab_l, \wh\Vb_l}$ but not $\wh\Ab_l$), the evaluation of $\rbr{\Ib_m - \wh\Ub_l \wh\Ub_l^*}\Ab$ will inevitably take $O(mnl)$ as that of $\Ab\rbr{\Ib_n - \wh\Vb_l \wh\Vb_l^*}$.
}, respectively, the exact evaluation of their full spectra can be unaffordable. A straightforward remedy for this problem is {to use} only the second terms in the right-hand-sides of \Cref{eq:residual_based_posterior_left} and \Cref{eq:residual_based_posterior_right} while estimating $\nbr{\rbr{\Ib_m - \wh\Ub_l \wh\Ub_l^*}\Ab}_2$ and $\nbr{\Ab\rbr{\Ib_n - \wh\Vb_l \wh\Vb_l^*}}_2$ with the randomized power method (\cf \cite{kuczynski1992estimating}, \cite{martinsson2020randomized} Algorithm 4). Alternatively, we leverage the analysis from \cite{nakatsukasa2020sharp} Theorem 6.1 and present the following posterior bounds based only on norms of the residuals which can be estimated efficiently via sampling.

\begin{theorem}\label{thm:separate_gap_bounds}
% For a rank-$l$ approximation yielded by \Cref{algo:rsvd_power_iterations}, 
For any {approximated rank-$l$ SVD} $\Ab \approx \wh\Ub_l \wh\Sigmab_l \wh\Vb_l^*$ (not necessarily obtained by \Cref{algo:rsvd_power_iterations}), recall the notation that $\wh\Ub_l = \sbr{\wh\Ub_k, \wh\Ub_{l \setminus k}}$, $\wh\Vb_l = \sbr{\wh\Vb_k, \wh\Vb_{l \setminus k}}$, while $\wh\Ub_{m \setminus l}, \wh\Ub_{n \setminus l}$ are the orthogonal complements of $\wh\Ub_l, \wh\Vb_l$, respectively.
Then, with $\Eb_{31} \dfeq \wh\Ub^*_{m \setminus l} \Ab \wh\Vb_k$, 
$\Eb_{32} \dfeq \wh\Ub^*_{m \setminus l} \Ab \wh\Vb_{l \setminus k}$, and 
$\Eb_{33} \dfeq \wh\Ub^*_{m \setminus l} \Ab \wh\Vb_{n \setminus l}$, assuming $\sigma_k > \wh\sigma_{k+1}$ and $\sigma_k > \nbr{\Eb_{33}}_2$, we define the spectral gaps
\begin{align*}
    \gamma_1 \dfeq \frac{\sigma_k^2 - \wh\sigma_{k+1}^2}{\sigma_k},
    \quad
    \gamma_2 \dfeq \frac{\sigma_k^2 - \wh\sigma_{k+1}^2}{\wh\sigma_{k+1}},
    \quad
    \Gamma_1 = \frac{\sigma_k^2-\nbr{\Eb_{33}}_2^2}{\sigma_k},
    \quad
    \Gamma_2 = \frac{\sigma_k^2-\nbr{\Eb_{33}}_2^2}{\nbr{\Eb_{33}}_2}
.\end{align*}   
Then for an arbitrary {unitarily} invariant norm $\nnbr{\cdot}$,
\begin{align}
    \label{eq:separate_gap_Uk_Ul}
    &\nnbr{\sin\angle\rbr{\Ub_k,\wh\Ub_l}} \leq \frac{\nnbr{\sbr{\Eb_{31}, \Eb_{32}}}}{\Gamma_1},
    \\
    \label{eq:separate_gap_Vk_Vl}
    &\nnbr{\sin\angle\rbr{\Vb_k,\wh\Vb_l}} \leq \frac{\nnbr{\sbr{\Eb_{31}, \Eb_{32}}}}{\Gamma_2},
\end{align}
and specifically for the spectral or Frobenius norm $\nbr{\cdot}_\xi$ ($\xi=2,F$),
\begin{align}
    \label{eq:separate_gap_Uk_Uk}
    &\nbr{\sin\angle\rbr{\Ub_k,\wh\Ub_k}}_\xi \leq \frac{\nbr{\sbr{\Eb_{31}, \Eb_{32}}}_\xi}{\Gamma_1} 
    \sqrt{1 + \frac{\nbr{\Eb_{32}}_2^2}{\gamma_2^2}},
    \\
    \label{eq:separate_gap_Vk_Vk}
    &\nbr{\sin\angle\rbr{\Vb_k,\wh\Vb_k}}_\xi \leq \frac{\nbr{\sbr{\Eb_{31}, \Eb_{32}}}_\xi}{\Gamma_1} \sqrt{\frac{\nbr{\Eb_{32}}_2^2}{\gamma_1^2} + \frac{\nbr{\Eb_{33}}_2^2}{\sigma_k^2}}.
\end{align}
Furthermore, for all $i \in [k]$,
\begin{align}
    \label{eq:separate_gap_Uk_Ul_anglewise}
    &\sin\angle_i\rbr{\Ub_k,\wh\Ub_l} \leq \frac{\sigma_k}{\sigma_{k-i+1}} \cdot \frac{\nbr{\sbr{\Eb_{31}, \Eb_{32}}}_2}{\Gamma_1},
    \\
    \label{eq:separate_gap_Vk_Vl_anglewise}
    &\sin\angle_i\rbr{\Vb_k,\wh\Vb_l} \leq \frac{\sigma_k}{\sigma_{k-i+1}} \cdot \frac{\nbr{\sbr{\Eb_{31}, \Eb_{32}}}_2}{\Gamma_2},
    \\
    \label{eq:separate_gap_Uk_Uk_anglewise}
    &\sin\angle_i\rbr{\Ub_k,\wh\Ub_k} \leq \frac{\nbr{\sbr{\Eb_{31}, \Eb_{32}}}_2}{\Gamma_1} \sqrt{1 + \rbr{\frac{\sigma_k}{\sigma_{k-i+1}} \cdot \frac{\nbr{\Eb_{32}}_2}{\gamma_2}}^2 },
    \\
    \label{eq:separate_gap_Vk_Vk_anglewise}
    &\sin\angle_i\rbr{\Vb_k,\wh\Vb_k} \leq \frac{\nbr{\sbr{\Eb_{31}, \Eb_{32}}}_2}{\Gamma_1} \sqrt{\rbr{\frac{\sigma_k}{\sigma_{k-i+1}} \cdot \frac{\nbr{\Eb_{32}}_2}{\gamma_1}}^2 + \rbr{\frac{\nbr{\Eb_{33}}_2}{\sigma_k}}^2 }.
\end{align}
\end{theorem}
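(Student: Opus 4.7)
The plan is to work in the block representation of $\Ab$ obtained by rotating the bases to $\sbr{\wh\Ub_k, \wh\Ub_{l \setminus k}, \wh\Ub_{m \setminus l}}$ on the left and $\sbr{\wh\Vb_k, \wh\Vb_{l \setminus k}, \wh\Vb_{n \setminus l}}$ on the right. Since \Cref{algo:rsvd_power_iterations} forces $\wh\Ub_l^* \Ab = \wh\Sigmab_l \wh\Vb_l^*$ (a direct consequence of $\wh\Ab_l = \wh\Ub_l\wh\Ub_l^* \Ab$), the top two block-rows of $\Ab$ are diagonal in this basis, so
\begin{equation*}
    \Ab \cong \bmat{\wh\Sigmab_k & 0 & 0 \\ 0 & \wh\Sigmab_{l \setminus k} & 0 \\ \Eb_{31} & \Eb_{32} & \Eb_{33}}.
\end{equation*}
Introduce the four unknowns $X = \wh\Ub_{m \setminus l}^* \Ub_k$, $Y = \wh\Vb_{n \setminus l}^* \Vb_k$, $Z = \wh\Ub_{l \setminus k}^* \Ub_k$, and $W = \wh\Vb_{l \setminus k}^* \Vb_k$, whose singular values encode the four canonical-angle quantities: $\sin\angle(\Ub_k, \wh\Ub_l) \leftrightarrow X$, $\sin\angle(\Vb_k, \wh\Vb_l) \leftrightarrow Y$, and $\sin\angle(\Ub_k, \wh\Ub_k)$, $\sin\angle(\Vb_k, \wh\Vb_k)$ through the vertical stackings $\bmat{Z \\ X}$, $\bmat{W \\ Y}$.

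Projecting the defining identities $\Ab \Vb_k = \Ub_k \Sigmab_k$ and $\Ab^* \Ub_k = \Vb_k \Sigmab_k$ onto the four complementary subspaces yields the coupled system
\begin{align*}
    X \Sigmab_k &= [\Eb_{31}, \Eb_{32}] \wh\Vb_l^* \Vb_k + \Eb_{33} Y, & Y \Sigmab_k &= \Eb_{33}^* X, \\
    W \Sigmab_k &= \wh\Sigmab_{l \setminus k} Z + \Eb_{32}^* X, & Z \Sigmab_k &= \wh\Sigmab_{l \setminus k} W.
\end{align*}
Eliminating $Y$ from the first pair produces the Sylvester-type equation $X \Sigmab_k^2 - \Eb_{33} \Eb_{33}^* X = [\Eb_{31}, \Eb_{32}] \wh\Vb_l^* \Vb_k \Sigmab_k$, and eliminating $W$ from the second pair gives an analogous equation for $Z$ whose gap is governed by $\sigma_k^2 - \wh\sigma_{k+1}^2$ in place of $\sigma_k^2 - \nbr{\Eb_{33}}_2^2$.

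The unitarily invariant bounds \Cref{eq:separate_gap_Uk_Ul} and \Cref{eq:separate_gap_Vk_Vl} then follow from the elementary inequality $\nnbr{M\Sigmab_k} \ge \sigma_k \nnbr{M}$, valid for any unitarily invariant norm since $M = (M\Sigmab_k)\Sigmab_k^{-1}$ and $\nbr{\Sigmab_k^{-1}}_2 = \sigma_k^{-1}$. Applied to $Y \Sigmab_k = \Eb_{33}^* X$ it gives $\nnbr{Y} \le (\nbr{\Eb_{33}}_2/\sigma_k) \nnbr{X}$; applied to the first identity together with the triangle inequality it produces $\sigma_k \nnbr{X} \le \nnbr{[\Eb_{31}, \Eb_{32}]} + \nbr{\Eb_{33}}_2 \nnbr{Y}$, which after substitution yields $\nnbr{X} \le \nnbr{[\Eb_{31}, \Eb_{32}]}/\Gamma_1$, with the factor $\Gamma_1 = (\sigma_k^2 - \nbr{\Eb_{33}}_2^2)/\sigma_k$ appearing naturally. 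Plugging back and noting the algebraic identity $\sigma_k \Gamma_1 = \nbr{\Eb_{33}}_2 \Gamma_2$ delivers \Cref{eq:separate_gap_Vk_Vl}. Running the identical recipe on the $(Z, W)$ pair produces $\nbr{W}_\xi \le \nbr{\Eb_{32}}_2 \nbr{X}_\xi / \gamma_1$ and $\nbr{Z}_\xi \le \nbr{\Eb_{32}}_2 \nbr{X}_\xi / \gamma_2$ for $\xi \in \cbr{2, F}$, which combine with $\nbr{\bmat{Z \\ X}}_\xi^2 \le \nbr{Z}_\xi^2 + \nbr{X}_\xi^2$ and the bound on $\nbr{X}_\xi$ to give \Cref{eq:separate_gap_Uk_Uk}; the analogous stacking of $\bmat{W \\ Y}$ gives \Cref{eq:separate_gap_Vk_Vk}.

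For the anglewise bounds \Cref{eq:separate_gap_Uk_Ul_anglewise}--\Cref{eq:separate_gap_Vk_Vk_anglewise}, I would refine the above by restricting the Sylvester equation to each column: $(\sigma_j^2 \Ib - \Eb_{33}\Eb_{33}^*) X \eb_j = \sigma_j [\Eb_{31}, \Eb_{32}] \wh\Vb_l^* \vb_j$, yielding $\nbr{X \eb_j}_2 \le \sigma_j \nbr{[\Eb_{31}, \Eb_{32}]}_2 / (\sigma_j^2 - \nbr{\Eb_{33}}_2^2)$. These column-wise estimates must then be promoted to per-singular-value statements via Horn's product inequality $\sigma_{k-i+1}(X) \le \nbr{X\Sigmab_k}_2 \cdot \sigma_{k-i+1}(\Sigmab_k^{-1})$ combined with the sharpened $\nbr{X\Sigmab_k}_2 \le \sigma_k \nbr{[\Eb_{31}, \Eb_{32}]}_2 / \Gamma_1$ extracted from the spectral-norm case above. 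The main obstacle I anticipate lies here: Sylvester inversion is naturally an operator-norm statement, so refining it to a per-singular-value inequality with precisely the prefactor $\sigma_k/\sigma_{k-i+1}$ requires carefully interleaving column-wise estimates, Horn's inequality, and the block structure, and verifying that composition with the corresponding bounds on $Y$, $Z$, $W$ reproduces the exact forms in \Cref{eq:separate_gap_Uk_Uk_anglewise} and \Cref{eq:separate_gap_Vk_Vk_anglewise} is a nontrivial bookkeeping exercise.
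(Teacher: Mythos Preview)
Your proposal is correct and follows essentially the same approach as the paper: the block representation of $\Ab$, the coupled system from projecting $\Ab\Vb_k=\Ub_k\Sigmab_k$ and $\Ab^*\Ub_k=\Vb_k\Sigmab_k$, elimination into Sylvester-type relations, and Horn's product inequality for the anglewise statements are exactly the paper's ingredients.

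One organizational point resolves the ``main obstacle'' you anticipate. Rather than bounding $\nnbr{X}$ first and then trying to promote to per-singular-value statements, the paper carries out the entire elimination for the quantity $X\Sigmab_k$: substituting $Y=\Eb_{33}^*X\Sigmab_k^{-1}$ into $X\Sigmab_k=[\Eb_{31},\Eb_{32}]\wh\Vb_l^*\Vb_k+\Eb_{33}Y$ gives directly
\[
\nnbr{X\Sigmab_k}\le\nnbr{[\Eb_{31},\Eb_{32}]}+\frac{\nbr{\Eb_{33}}_2^2}{\sigma_k^2}\nnbr{X\Sigmab_k},
\]
hence $\nnbr{X\Sigmab_k}\le\sigma_k\nnbr{[\Eb_{31},\Eb_{32}]}/\Gamma_1$. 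The norm bound \Cref{eq:separate_gap_Uk_Ul} then follows from $\nnbr{X}\le\sigma_k^{-1}\nnbr{X\Sigmab_k}$, and the anglewise bound \Cref{eq:separate_gap_Uk_Ul_anglewise} is an immediate one-liner: $\sigma_i(X)\le\nbr{X\Sigmab_k}_2\,\sigma_i(\Sigmab_k^{-1})=\nbr{X\Sigmab_k}_2/\sigma_{k-i+1}$. The same trick (bound $\nnbr{Z\Sigmab_k}$, $\nnbr{W\Sigmab_k}$) handles the remaining anglewise claims. Your column-wise Sylvester estimates are therefore unnecessary; the ``nontrivial bookkeeping exercise'' disappears once you keep the factor $\Sigmab_k$ attached throughout.
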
   

In practice, norms of the residuals can be computed as
\begin{align*}
    &\nnbr{\sbr{\Eb_{31},\Eb_{32}}} = \nnbr{\wh\Ub^*_{m \setminus l} \Ab \wh\Vb_l} = \nnbr{\rbr{\Ib_m - \wh\Ub_l \wh\Ub_l^*} \Ab \wh\Vb_l} = \nnbr{\rbr{\Ab - \wh\Ab_l} \wh\Vb_l},
    \\
    &\nbr{\Eb_{32}}_2 = \nbr{\wh\Ub^*_{m \setminus l} \Ab \wh\Vb_{l \setminus k}}_2 = \nbr{\rbr{\Ab - \wh\Ab_l} \wh\Vb_{l \setminus k}}_2,
    \\
    &\nnbr{\Eb_{33}}_2 = \nbr{\wh\Ub^*_{m \setminus l} \Ab \wh\Vb_{n \setminus l}} = \nbr{\rbr{\Ab - \wh\Ab_l} \rbr{\Ib_n - \wh\Vb_l \wh\Vb_l^*}}_2 = \nbr{\Ab - \Ab \wh\Vb_l \wh\Vb_l^*}_2,
\end{align*}    
where the construction of $\rbr{\Ab - \wh\Ab_l} \wh\Vb_l$, $\rbr{\Ab - \wh\Ab_l} \wh\Vb_{l \setminus k}$, and $\Ab - \Ab \wh\Vb_l \wh\Vb_l^*$ takes $O\rbr{mnl}$ time, while the respective norms can be estimated efficiently via sampling (\cf \cite{martinsson2020randomized} Algorithm 1-4, \cite{meyer2020hutch} Algorithm 1-3, etc.). {With an unknown true spectrum in practice, replacing the true singular values in \Cref{thm:separate_gap_bounds} with their corresponding approximations from \Cref{algo:rsvd_power_iterations} usually yields similar upper bounds (\cf~\Cref{subsec:experiment_canonical_angle}).}

The proof of \Cref{thm:separate_gap_bounds} is 
{similar to that of \cite[Thm.~6.1]{nakatsukasa2020sharp}}.  

\begin{proof}[Proof of \Cref{thm:separate_gap_bounds}]
Let $\wt\Ub_{11} \dfeq \wh\Ub^*_{k} \Ub_k$, $\wt\Ub_{21} \dfeq \wh\Ub^*_{l \setminus k} \Ub_k$, $\wt\Ub_{31} \dfeq \wh\Ub^*_{m \setminus l} \Ub_k$, and $\wt\Vb_{11} \dfeq \wh\Vb^*_{k} \Vb_k$, $\wt\Vb_{21} \dfeq \wh\Vb^*_{l \setminus k} \Vb_k$, and $\wt\Vb_{31} \dfeq \wh\Vb^*_{n \setminus l} \Vb_k$.
We start by expressing the canonical angles in terms of $\wt\Ub_{31}$ and $\wt\Ub_{21}$:
\begin{align*}
    \sin\angle\rbr{\Ub_k, \wh\Ub_l} = 
    \sigma\rbr{\wh\Ub^*_{m \setminus l} \Ub_k} = 
    \sigma\rbr{\wt\Ub_{31}},
\end{align*}
\begin{align*}
    \sin\angle\rbr{\Vb_k, \wh\Vb_l} = 
    \sigma\rbr{\wh\Vb^*_{n \setminus l} \Vb_k} = 
    \sigma\rbr{\wt\Vb_{31}},
\end{align*}
\begin{align*}
    \sin\angle\rbr{\Ub_k, \wh\Ub_k} = 
    \sigma\rbr{\bmat{\wh\Ub^*_{l \setminus k} \\ \wh\Ub^*_{m \setminus l}} \Ub_k} = 
    \sigma\rbr{\bmat{\wt\Ub_{21} \\ \wt\Ub_{31}}},
\end{align*}
\begin{align*}
    \sin\angle\rbr{\Vb_k, \wh\Vb_k} = 
    \sigma\rbr{\bmat{\wh\Vb^*_{l \setminus k} \\ \wh\Vb^*_{n \setminus l}} \Vb_k} = 
    \sigma\rbr{\bmat{\wt\Vb_{21} \\ \wt\Vb_{31}}}.
\end{align*}
By observing that for any rank-$l$ approximation in the SVD form $\Ab \approx \wh\Ub_l \wh\Sigmab_l \wh\Vb_l^*$,
\begin{align*}
    \Ab = 
    \wh\Ub_l \wh\Sigmab_l \wh\Vb_l^* + \wh\Ub_{m \setminus l} \wh\Ub_{m \setminus l}^* \Ab = 
    \bmat{\wh\Ub_k & \wh\Ub_{l \setminus k} & \wh\Ub_{m \setminus l}}
    \bmat{\wh\Sigmab_k & \b0 & \b0 \\
    \b0 & \wh\Sigmab_{l \setminus k} & \b0 \\
    \Eb_{31} & \Eb_{32} & \Eb_{33}}
    \bmat{\wh\Vb_k^* \\ \wh\Vb_{l \setminus k}^* \\ \wh\Vb_{n \setminus l}^*}
,\end{align*}  
we left multiply by {$\wh\Ub^* = \sbr{\wh\Ub_k, \wh\Ub_{l \setminus k}, \wh\Ub_{m \setminus l}}^* \in \C^{m \times m}$} and right multiply by $\Vb_k$ on both sides and get
\begin{align}\label{eq:proof_mixed_gap_bound_submatrix_relation_31}
    \bmat{\wt\Ub_{11} \\ \wt\Ub_{21} \\ \wt\Ub_{31}} \Sigmab_k = 
    \bmat{\wh\Sigmab_k & \b0 & \b0 \\
    \b0 & \wh\Sigmab_{l \setminus k} & \b0 \\
    \Eb_{31} & \Eb_{32} & \Eb_{33}}
    \bmat{\wt\Vb_{11} \\ \wt\Vb_{21} \\ \wt\Vb_{31}}
,\end{align}    
while left multiplying $\Ub_k^*$ and right multiplying $\wh\Vb = \sbr{\wh\Vb_k, \wh\Vb_{l \setminus k}, \wh\Vb_{n \setminus l}}$ yield
\begin{align}\label{eq:proof_mixed_gap_bound_submatrix_relation_13}
    \Sigmab_k \bmat{\wt\Vb_{11}^* & \wt\Vb_{21}^* & \wt\Vb_{31}^*} =
    \bmat{\wt\Ub_{11}^* & \wt\Ub_{21}^* & \wt\Ub_{31}^*}
    \bmat{\wh\Sigmab_k & \b0 & \b0 \\
    \b0 & \wh\Sigmab_{l \setminus k} & \b0 \\
    \Eb_{31} & \Eb_{32} & \Eb_{33}}
.\end{align}

\paragraph{Bounding $\sigma\rbr{\wt\Ub_{31}}$ and $\sigma\rbr{\wt\Vb_{31}}$.}
To bound $\sigma\rbr{\wt\Ub_{31}}$, we observe the following from the third row of \Cref{eq:proof_mixed_gap_bound_submatrix_relation_31} and the third column of \Cref{eq:proof_mixed_gap_bound_submatrix_relation_13}, 
\begin{align*}
    \wt\Ub_{31} \Sigmab_k = \Eb_{31} \wt\Vb_{11} + \Eb_{32}\wt\Vb_{21} + \Eb_{33}\wt\Vb_{31},
    \quad
    \wt\Ub_{31}^* \Eb_{33} = \Sigmab_k \wt\Vb_{31}^*.
\end{align*}
Noticing that {$\bmat{\wt\Vb_{11} \\ \wt\Vb_{21}} = \wh\Vb_l^* \Vb_k$} and $\nbr{\wh\Vb_l^* \Vb_k}_2 \le 1$, we have
\begin{align*}
    \nnbr{\wt\Ub_{31} \Sigmab_k} 
    \le &\nnbr{\sbr{\Eb_{31},\Eb_{32}}} \nbr{\wh\Vb_l^* \Vb_k}_2 + \nnbr{\Eb_{33} \Eb_{33}^* \wt\Ub_{31} \Sigmab_k^{-1}}
    \\
    \le &\nnbr{\sbr{\Eb_{31},\Eb_{32}}} + \frac{\nbr{\Eb_{33}}_2^2}{\sigma_k^2} \nnbr{\wt\Ub_{31} \Sigmab_k}
\end{align*} 
for all $i \in [\min\rbr{k,m-l}]$, which implies that
\begin{align*}
    \nnbr{\wt\Ub_{31} \Sigmab_k} 
    \le \rbr{1-\frac{\nbr{\Eb_{33}}_2^2}{\sigma_k^2}}^{-1} \nnbr{\sbr{\Eb_{31},\Eb_{32}}}
    = \sigma_k \cdot \frac{\nnbr{\sbr{\Eb_{31},\Eb_{32}}}}{\Gamma_1},
\end{align*}
and leads to 
\begin{align*}
    &\nnbr{\wt\Ub_{31}} 
    \le \frac{1}{\sigma_k} \nnbr{\wt\Ub_{31} \Sigmab_k} 
    \le \frac{\nnbr{\sbr{\Eb_{31},\Eb_{32}}}}{\Gamma_1},
    \\
    &\sigma_i\rbr{\wt\Ub_{31}}
    \le \frac{1}{\sigma_{k-i+1}} \nbr{\wt\Ub_{31} \Sigmab_k}_2 
    \le \frac{\sigma_k}{\sigma_{k-i+1}} \cdot \frac{\nbr{\sbr{\Eb_{31}, \Eb_{32}}}_2}{\Gamma_1}
    \quad \forall~i \in [k],
\end{align*}
where the second line follows from \Cref{lemma:individual_sval_of_product}. These lead to \Cref{eq:separate_gap_Uk_Ul,eq:separate_gap_Uk_Ul_anglewise}

To bound $\sigma\rbr{\wt\Vb_{31}}$, we use the relation $\wt\Ub_{31}^* \Eb_{33} = \Sigmab_k \wt\Vb_{31}^*$,
\begin{align*}
    &\nnbr{\wt\Vb_{31}} \le 
    \frac{\nbr{\Eb_{33}}_2}{\sigma_k} \nnbr{\wt\Ub_{31}} \le 
    \frac{\nnbr{\sbr{\Eb_{31}, \Eb_{32}}}}{\Gamma_2},
    \\
    &\sigma_i\rbr{\wt\Vb_{31}} 
    \le \frac{1}{\sigma_k} \sigma_i\rbr{\Eb_{33}^* \wt\Ub_{31}}
    \le \frac{\nbr{\Eb_{33}}_2}{\sigma_k} \sigma_i\rbr{\wt\Ub_{31}}
    \le \frac{\sigma_k}{\sigma_{k-i+1}} \cdot \frac{\nbr{\sbr{\Eb_{31}, \Eb_{32}}}_2}{\Gamma_2}
    \quad \forall~i \in[k].
\end{align*}    
We therefore have upper bounds \Cref{eq:separate_gap_Vk_Vl,eq:separate_gap_Vk_Vl_anglewise}.

\paragraph{Bounding $\sigma\rbr{\wt\Ub_{21}}$ and $\sigma\rbr{\wt\Vb_{21}}$.}
To bound $\sigma\rbr{\wt\Ub_{21}}$, we leverage the second row of \Cref{eq:proof_mixed_gap_bound_submatrix_relation_31} and the second column of \Cref{eq:proof_mixed_gap_bound_submatrix_relation_13},
\begin{align*}
    \wt\Ub_{21} \Sigmab_k = \wh\Sigmab_{l \setminus k} \wt\Vb_{21}, 
    \quad
    \Sigmab_k \wt\Vb_{21}^* = \wt\Ub_{21}^* \wh\Sigmab_{l \setminus k} + \wt\Ub_{31}^* \Eb_{32}.
\end{align*}    
Up to rearrangement, we observe that
\begin{align*}
    \nnbr{\wt\Ub_{21} \Sigmab_k} =
    & \nnbr{\wh\Sigmab_{l \setminus k} \rbr{\wh\Sigmab_{l \setminus k} \wt\Ub_{21} + \Eb_{32}^* \wt\Ub_{31}} \Sigmab_k^{-1}}
    \\
    \le &\frac{\nbr{\wh\Sigmab_{l \setminus k}}_2^2}{\sigma_k^2} \nnbr{\wt\Ub_{21} \Sigmab_k} + \frac{\nbr{\wh\Sigmab_{l \setminus k}}_2}{\sigma_k} \nnbr{\Eb_{32}^* \wt\Ub_{31}},
\end{align*}
which implies that
\begin{align*}
    &\nnbr{\wt\Ub_{21} \Sigmab_k} 
    \le \rbr{1-\frac{\wh\sigma_{k+1}^2}{\sigma_k^2}}^{-1} \frac{\wh\sigma_{k+1}}{\sigma_k} \nnbr{\Eb_{32}^* \wt\Ub_{31}}
    \le \sigma_k \cdot \frac{\nbr{\Eb_{32}}_2}{\gamma_2} \nnbr{\wt\Ub_{31}},
\end{align*}
and therefore, with \Cref{lemma:individual_sval_of_product}, for all $i \in [k]$,
\begin{align*}
    &\nnbr{\wt\Ub_{21}}
    \le \frac{1}{\sigma_k} \nnbr{\wt\Ub_{21} \Sigmab_k} 
    \le \frac{\nbr{\Eb_{32}}_2}{\gamma_2} \nnbr{\wt\Ub_{31}},
    \\
    &\sigma_i\rbr{\wt\Ub_{21}} 
    \le \frac{1}{\sigma_{k-i+1}} \nbr{\wt\Ub_{21} \Sigmab_k}_2
    \le \frac{\sigma_k}{\sigma_{k-i+1}} \cdot \frac{\nbr{\Eb_{32}}_2}{\gamma_2} \nbr{\wt\Ub_{31}}_2.
\end{align*}
Then, with the stronger inequality for the spectral or Frobenius norm $\nbr{\cdot}_{\xi}$ ($\xi=2,F$), 
\begin{align*}
    \nbr{\bmat{\wt\Ub_{21} \\ \wt\Ub_{31}}}_\xi 
    \le &\sqrt{\nbr{\wt\Ub_{31}}_\xi^2 + \nbr{\wt\Ub_{21}}_\xi^2} 
    \le \nbr{\wt\Ub_{31}}_\xi \sqrt{1 + \frac{\nbr{\Eb_{32}}_2^2}{\gamma_2^2}}
    \\
    \le &\frac{\nbr{\sbr{\Eb_{31}, \Eb_{32}}}_\xi}{\Gamma_1} 
    \sqrt{1 + \frac{\nbr{\Eb_{32}}_2^2}{\gamma_2^2}}
\end{align*} 
leads to \Cref{eq:separate_gap_Uk_Uk}.
Meanwhile for \Cref{eq:separate_gap_Uk_Uk_anglewise}, the individual canonical angles are upper bounded by
\begin{align*}
    \sigma_i\rbr{\bmat{\wt\Ub_{21} \\ \wt\Ub_{31}}} 
    = &\sqrt{\sigma_i\rbr{\wt\Ub_{21}^* \wt\Ub_{21} + \wt\Ub_{31}^* \wt\Ub_{31}}}
    \\
    \rbr{\text{\Cref{lemma:individual_sval_of_sum}}} \quad
    \le &\sqrt{\nbr{\wt\Ub_{31}}_2^2 + \sigma_i\rbr{\wt\Ub_{21}}^2}
    \\
    \le &\nbr{\wt\Ub_{31}}_2 \sqrt{1 + \rbr{\frac{\sigma_k}{\sigma_{k-i+1}} \cdot \frac{\nbr{\Eb_{32}}_2}{\gamma_2}}^2 }
    \\
    \le &\frac{\nbr{\sbr{\Eb_{31}, \Eb_{32}}}_2}{\Gamma_1} \sqrt{1 + \rbr{\frac{\sigma_k}{\sigma_{k-i+1}} \cdot \frac{\nbr{\Eb_{32}}_2}{\gamma_2}}^2 }.
\end{align*}
Analogously, by observing that 
\begin{align*}
    \nnbr{\wt\Vb_{21} \Sigmab_k} =
    & \nnbr{\wh\Sigmab_{l \setminus k}^2 \wt\Vb_{21} \Sigmab_k^{-1} + \Eb_{32}^* \wt\Ub_{31}}
    \le \frac{\nbr{\wh\Sigmab_{l \setminus k}}_2^2}{\sigma_k^2} \nnbr{\wt\Vb_{21} \Sigmab_k} + \nnbr{\Eb_{32}^* \wt\Ub_{31}},
\end{align*}
we have that, by \Cref{lemma:individual_sval_of_product}, for all $i \in [k]$,
\begin{align*}
    &\nnbr{\wt\Vb_{21} \Sigmab_k} 
    \le \rbr{1-\frac{\wh\sigma_{k+1}^2}{\sigma_k^2}}^{-1} \nnbr{\Eb_{32}^* \wt\Ub_{31}}
    \le \sigma_k \cdot \frac{\nbr{\Eb_{32}}_2}{\gamma_1} \nnbr{\wt\Ub_{31}},
    \\
    &\nnbr{\wt\Vb_{21}} 
    \le \frac{1}{\sigma_k} \nnbr{\wt\Vb_{21} \Sigmab_k} 
    \le \frac{\nbr{\Eb_{32}}_2}{\gamma_1} \nnbr{\wt\Ub_{31}},
    \\
    &\sigma_i\rbr{\wt\Vb_{21}}
    \le \frac{1}{\sigma_{k-i+1}} \nbr{\wt\Vb_{21} \Sigmab_k}_2
    \le \frac{\sigma_k}{\sigma_{k-i+1}} \cdot \frac{\nbr{\Eb_{32}}_2}{\gamma_1} \nbr{\wt\Ub_{31}}_2,
\end{align*}
and therefore for the spectral or Frobenius norm $\nbr{\cdot}_{\xi}$ ($\xi=2,F$),
\begin{align*}
    \nbr{\bmat{\wt\Vb_{21} \\ \wt\Vb_{31}}}_\xi
    \le &\sqrt{\nbr{\wt\Vb_{21}}_\xi^2 + \nbr{\wt\Vb_{31}}_\xi^2}  
    \le \nbr{\wt\Ub_{31}}_\xi \sqrt{\frac{\nbr{\Eb_{32}}_2^2}{\gamma_1^2} + \frac{\nbr{\Eb_{33}}_2^2}{\sigma_k^2}}
    \\
    \le &\frac{\nbr{\sbr{\Eb_{31}, \Eb_{32}}}_\xi}{\Gamma_1} \sqrt{\frac{\nbr{\Eb_{32}}_2^2}{\gamma_1^2} + \frac{\nbr{\Eb_{33}}_2^2}{\sigma_k^2}},
\end{align*}  
which leads to \Cref{eq:separate_gap_Vk_Vk}.
Additionally for individual canonical angles $i \in [k]$,
\begin{align*}
    \sigma_i\rbr{\bmat{\wt\Vb_{21} \\ \wt\Vb_{31}}} 
    \le &\sqrt{\sigma_i\rbr{\wt\Vb_{21}}^2 + \nbr{\wt\Vb_{31}}_2^2}
    \quad \rbr{\text{\Cref{lemma:individual_sval_of_sum}}}
    \\
    \le &\nbr{\wt\Ub_{31}}_2 \sqrt{\rbr{\frac{\sigma_k}{\sigma_{k-i+1}} \cdot \frac{\nbr{\Eb_{32}}_2}{\gamma_1}}^2 + \rbr{\frac{\nbr{\Eb_{33}}_2}{\sigma_k}}^2 }
    \\
    \le &\frac{\nbr{\sbr{\Eb_{31}, \Eb_{32}}}_2}{\Gamma_1} \sqrt{\rbr{\frac{\sigma_k}{\sigma_{k-i+1}} \cdot \frac{\nbr{\Eb_{32}}_2}{\gamma_1}}^2 + \rbr{\frac{\nbr{\Eb_{33}}_2}{\sigma_k}}^2 }.
\end{align*}
% \rrr{We, therefore, end the proof by showing upper bounds \Cref{eq:separate_gap_Uk_Uk} and \Cref{eq:separate_gap_Vk_Vk}.}
{This yields \Cref{eq:separate_gap_Vk_Vk_anglewise} and completes the proof.}
\end{proof}

\section{Numerical Experiments}\label{sec:experiments}

First, we present numerical comparisons among different canonical angle upper bounds and the unbiased estimates on the left and right leading singular subspaces of various synthetic and real data matrices. We start by describing the target matrices in \Cref{subsec:target_matrix}.
In \Cref{subsec:experiment_canonical_angle}, we discuss the performance of the unbiased estimates, as well as the relative tightness of the canonical angle bounds, for different algorithmic choices based on the numerical observations.
Second, in \Cref{subsec:balance_oversampling_power_iterations_example}, we present an illustrative example that {provides} insight into the balance between oversampling and power iterations brought by the space-agnostic bounds.

\subsection{Target Matrices}\label{subsec:target_matrix}
We consider several different classes of target matrices, including some synthetic random matrices with 
different spectral patterns, as well as an empirical dataset, as summarized below:
\begin{enumerate}
    \item A random sparse non-negative (SNN) matrix~\cite{sorensen2016deim} $\Ab$ of size $m \times n$ takes the form,
    \begin{equation}
        \label{eq:snn-def}
        \Ab = \text{SNN}\rbr{a,r_1}:= \sum_{i=1}^{r_1} \frac{a}{i} \xb_i \yb_i^T + \sum_{i=r_1+1}^{\min\rbr{m,n}} \frac{1}{i} \xb_i \yb_i^T
    \end{equation}
    where $a>1$ and $r_1 < \min\rbr{m,n}$ control the spectral decay, and $\xb_i \in \C^m$, $\yb_i \in \C^{n}$ are random sparse vectors with non-negative entries.
    In the experiments, we test on two random SNN matrices of size $500 \times 500$ with $r_1 = 20$ and $a=1,100$, respectively.
    
    \item Gaussian dense matrices with controlled spectral decay are randomly generated via {a} similar construction {to} the SNN matrix, with $\xb_j \in \SSS^{m-1}$ and $\yb_j \in \SSS^{n-1}$ in (\ref{eq:snn-def}) replaced by uniformly random dense orthonormal vectors. The generating procedures for $\Ab \in \C^{m \times n}$ with rank $r \leq \min\rbr{m,n}$ can be summarized as following:
    \begin{enumerate}[label=(\roman*)]
        \item Draw Gaussian random matrices, $\Gb_m \in \C^{m \times r}$ and $\Gb_n \in \C^{m \times r}$.
        \item Compute $\Ub = \text{ortho}(\Gb_m) \in \C^{m \times r}$, $\Vb = \text{ortho}(\Gb_n) \in \C^{n \times r}$ as orthonormal bases. 
        \item Given the spectrum $\Sigmab = \diag\rbr{\sigma_1,\dots,\sigma_r}$, we construct $\Ab = \Ub \Sigmab \Vb^*$.
    \end{enumerate}
    In the experiments, we consider two types of spectral decay: 
    \begin{enumerate}[label=(\roman*)]
        \item slower decay with $r_1=20$, $\sigma_i = 1$ for all $i \in [r_1]$, $\sigma_i=1/\sqrt{i-r_1+1}$ for all $i=r_1+1,\dots,r$, and
        \item faster decay with $r_1=20$, $\sigma_i = 1$ for all $i \in [r_1]$, $\sigma_i=\max(0.99^{i-r_1}, 10^{-3})$ for all $i=r_1+1,\dots,r$.
    \end{enumerate}
    
    \item MNIST training set consists of $60,000$ images of hand-written digits from $0$ to $9$. Each image is of size $28 \times 28$. We form the target matrices by uniformly sampling $N = 800$ images from the MNIST training set. The images are flattened and normalized to form a full-rank matrix of size $N \times d$ where $d = 784$ is the size of the flattened images, with entries bounded in $[0,1]$. The nonzero entries take approximately $20\%$ of the matrix for both the training and the testing sets. 
\end{enumerate}

\subsection{Canonical Angle Bounds and Estimates}\label{subsec:experiment_canonical_angle}

Now we present numerical comparisons of the performance of the canonical angle bounds and the unbiased estimates under different algorithmic choices.
Considering the scenario where the true matrix spectra may not be available in practice, we calculate two sets of upper bounds, one from the true spectra $\Sigmab \in \C^{r \times r}$ and the other from the $l$ approximated singular values from \Cref{algo:rsvd_power_iterations}. 
For the later, we pad the approximated spectrum $\wh\Sigmab_l=\diag\rbr{\wh\sigma_1,\dots,\wh\sigma_l}$ with {$r-l$ copies of} $\wh\sigma_l$ and evaluate the canonical angle bounds and estimates with $\wt\Sigmab=\diag\rbr{\wh\sigma_1,\dots,\wh\sigma_l,\dots,\wh\sigma_l} \in \C^{r \times r}$.

\begin{figure}[!ht]
    \centering
    \includegraphics[width=\linewidth]{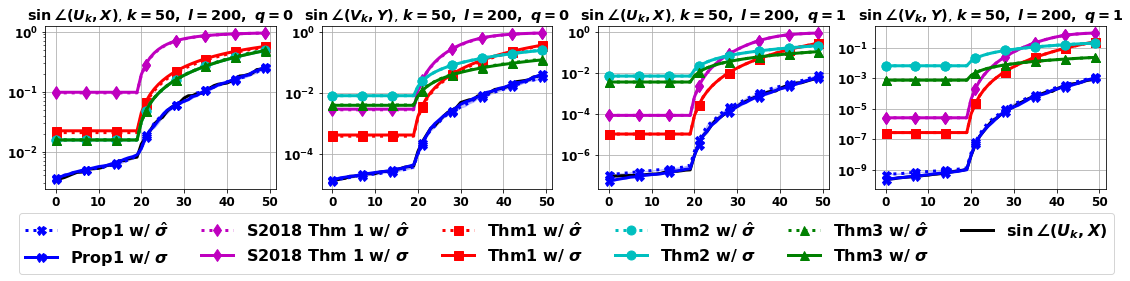}
    \caption{Synthetic Gaussian with the slower spectral decay. $k=50$, $l=200$, $q=0,1$.}
    \label{fig:Gaussian-poly1-kl_k50_l200}
\end{figure}

\begin{figure}[!ht]
    \centering
    \includegraphics[width=\linewidth]{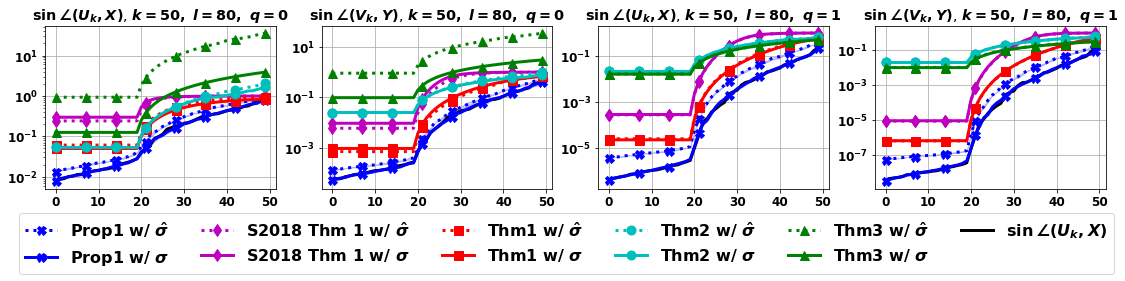}
    \caption{Synthetic Gaussian with the slower spectral decay. $k=50$, $l=80$, $q=0,1$.}
    \label{fig:Gaussian-poly1-kl_k50_l80}
\end{figure}

\begin{figure}[!ht]
    \centering
    \includegraphics[width=\linewidth]{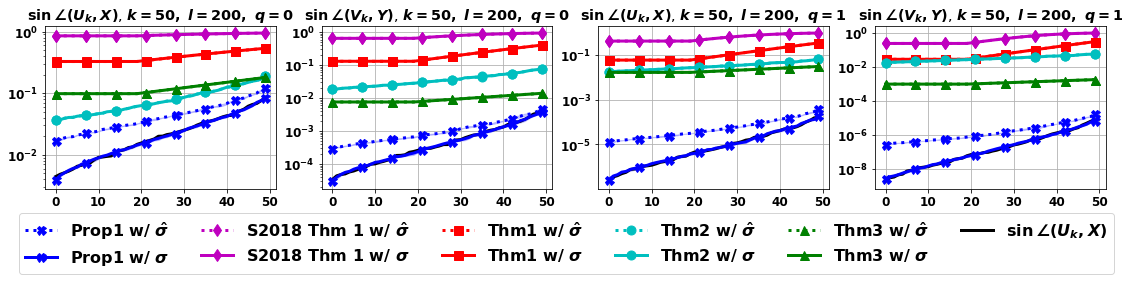}
    \caption{Synthetic Gaussian with the faster spectral decay. $k=50$, $l=200$, $q=0,1$.}
    \label{fig:Gaussian-exp-kl_k50_l200}
\end{figure}

\begin{figure}[!ht]
    \centering
    \includegraphics[width=\linewidth]{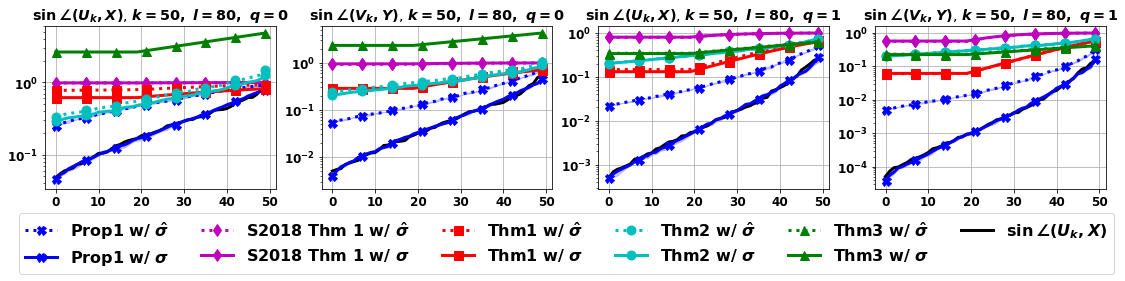}
    \caption{Synthetic Gaussian with the faster spectral decay. $k=50$, $l=80$, $q=0,1$.}
    \label{fig:Gaussian-exp-kl_k50_l80}
\end{figure}

\begin{figure}[!ht]
    \centering
    \includegraphics[width=\linewidth]{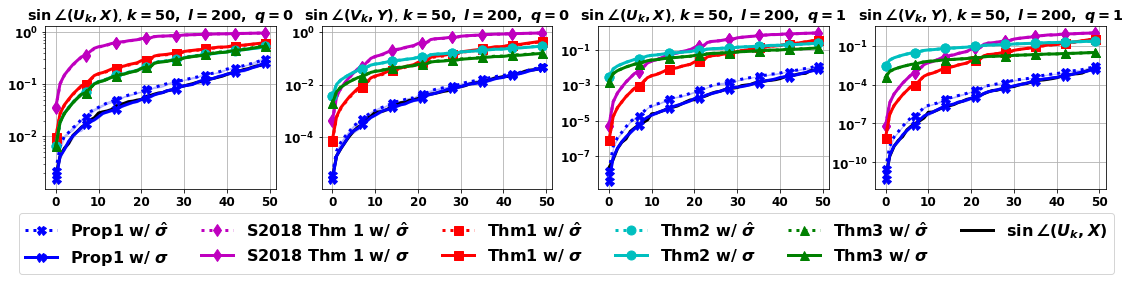}
    \caption{SNN with $r_1=20$, $a=1$. $k=50$, $l=200$, $q=0,1$.}
    \label{fig:SNN-m500n500r20a1-kl_k50_l200}
\end{figure}

\begin{figure}[!ht]
    \centering
    \includegraphics[width=\linewidth]{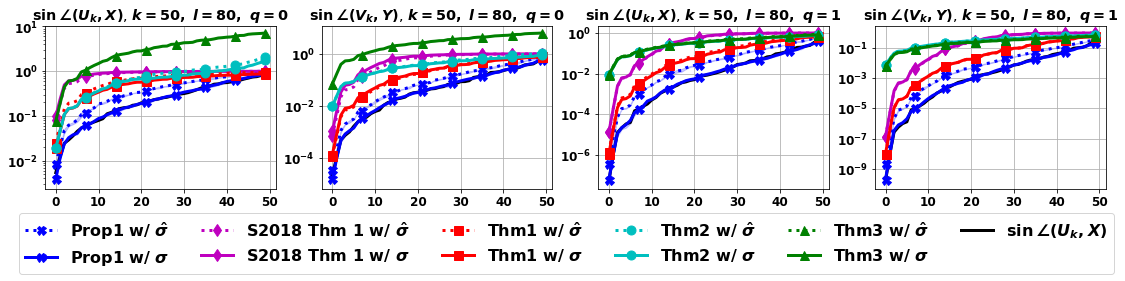}
    \caption{SNN with $r_1=20$, $a=1$. $k=50$, $l=80$, $q=0,1$.}
    \label{fig:SNN-m500n500r20a1-kl_k50_l80}
\end{figure}

\begin{figure}[!ht]
    \centering
    \includegraphics[width=\linewidth]{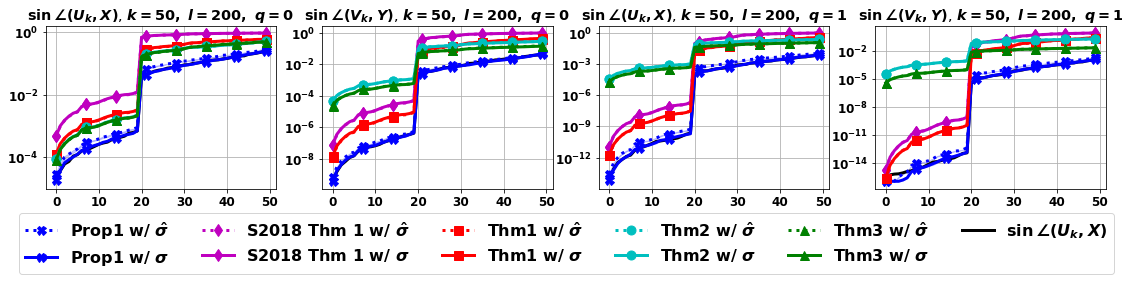}
    \caption{SNN with $r_1=20$, $a=100$. $k=50$, $l=200$, $q=0,1$.}
    \label{fig:SNN-m500n500r20a100-kl_k50_l200}
\end{figure}

\begin{figure}[!ht]
    \centering
    \includegraphics[width=\linewidth]{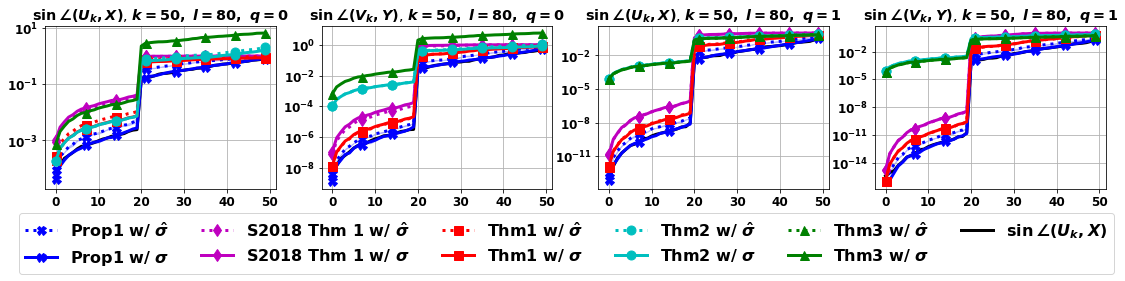}
    \caption{SNN with $r_1=20$, $a=100$. $k=50$, $l=80$, $q=0,1$.}
    \label{fig:SNN-m500n500r20a100-kl_k50_l80}
\end{figure}

\begin{figure}[!ht]
    \centering
    \includegraphics[width=\linewidth]{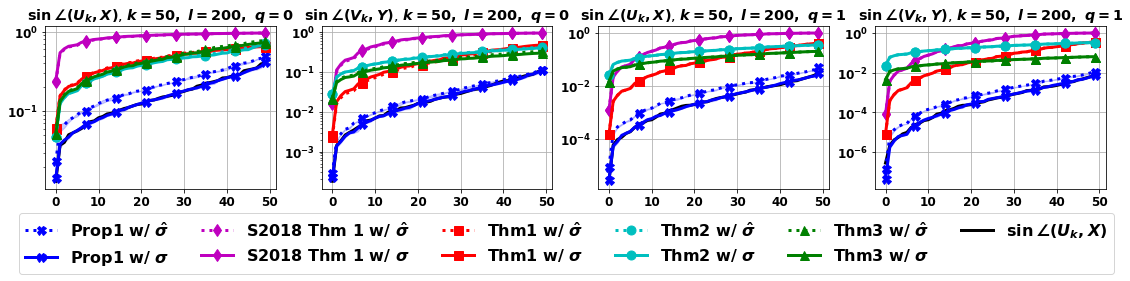}
    \caption{$800$ randomly sampled images from the MNIST training set. $k=50$, $l=200$, $q=0,1$.}
    \label{fig:mnist-train800-kl_k50_l200}
\end{figure}

\begin{figure}[!ht]
    \centering
    \includegraphics[width=\linewidth]{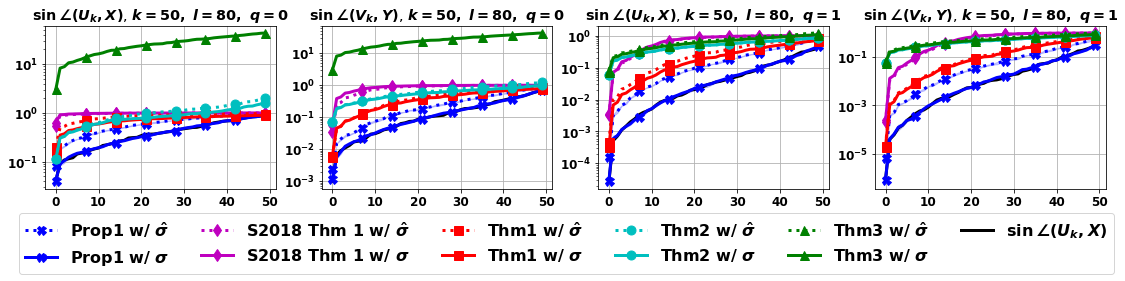}
    \caption{$800$ randomly sampled images from the MNIST training set. $k=50$, $l=80$, $q=0,1$.}
    \label{fig:mnist-train800-kl_k50_l80}
\end{figure}

\begin{figure}[!ht]
    \centering
    \includegraphics[width=\linewidth]{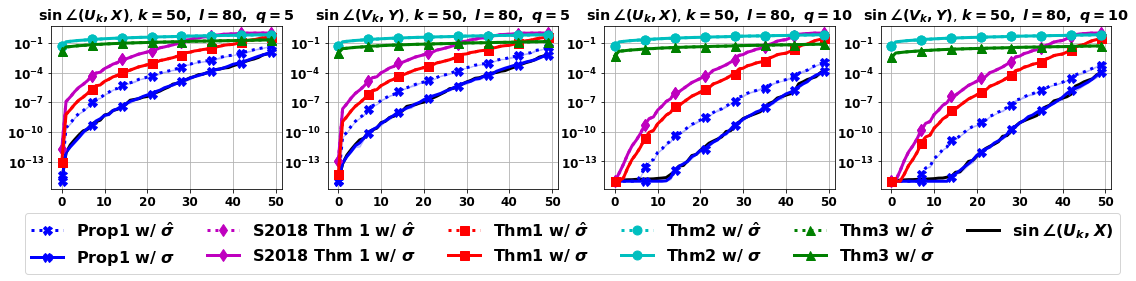}
    \caption{$800$ randomly sampled images from the MNIST training set. $k=50$, $l=80$, $q=5,10$.}
    \label{fig:mnist-train800-kl_k50_l80_p5-10}
\end{figure}

From \Cref{fig:Gaussian-poly1-kl_k50_l200} to \Cref{fig:mnist-train800-kl_k50_l80_p5-10},
\begin{enumerate}
    \item {
        % \color{red} 
        Red lines and dashes (\b{Thm1 w/ $\sigmab$ and $\wh\sigmab$})
    } represent the space-agnostic probabilistic bounds in \Cref{thm:space_agnostic_bounds} evaluated with the true (lines) and approximated (dashes) singular values, $\Sigmab$, and $\wt\Sigmab$, respectively, where we simply ignore tail decay and suppress constants for the distortion factors and set $\eps_1 = \sqrt{\frac{k}{l}}$ and $\eps_2=\sqrt{\frac{l}{r-k}}$ in {\Cref{eq:space_agnostic_left,eq:space_agnostic_right}};
    
    \item {
        % \color{blue} 
        Blue lines and dashes (\b{Prop1 w/ $\sigmab$ and $\wh\sigmab$})
    } represent the unbiased space-agnostic estimates in \Cref{prop:space_agnostic_estimation} (averages of $N=3$ independent trials with 
    % \colorbox{blue!20}
    {blue shades} marking the corresponding minima and maxima in the trials) evaluated with the true (lines) and approximated (dashes) singular values, $\Sigmab$ and $\wt\Sigmab$, respectively;
    
    \item {
        % \color{cyan} 
        Cyan lines and dashes (\b{Thm2 w/ $\sigmab$ and $\wh\sigmab$})
    } represent the posterior residual-based bounds in \Cref{thm:with_oversmp_computable_det} evaluated with the true (lines) and approximated (dashes) singular values, $\Sigmab$, and $\wt\Sigmab$, respectively;
    
    \item {
        % \color{cssgreen} 
        Green lines and dashes (\b{Thm3 w/ $\sigmab$ and $\wh\sigmab$})
    } represent the posterior residual-based bounds \Cref{eq:separate_gap_Uk_Ul} and \Cref{eq:separate_gap_Vk_Vl} in \Cref{thm:separate_gap_bounds} evaluated with the true (lines) and approximated (dashes) singular values, $\Sigmab$, and $\wt\Sigmab$, respectively;
    
    \item {
        % \color{magenta} 
        Magenta lines and dashes (\b{S2018 Thm1 w/ $\sigmab$ and $\wh\sigmab$})
    } represent the upper bounds in \cite{saibaba2018randomized} Theorem 1 (\ie, \Cref{eq:saibaba2018_thm1_left} and \Cref{eq:saibaba2018_thm1_right}) evaluated with the true (lines) and approximated (dashes) singular values, $\Sigmab$ and $\wt\Sigmab$, respectively, and the unknown true singular subspace such that $\Omegab_1 = \Vb_k^*\Omegab$ and $\Omegab_2 = \Vb_{r \setminus k}^*\Omegab$;
    
    \item {
        % \color{black} 
        Black lines
    } mark the true canonical angles $\sin\angle\rbr{\Ub_k, \wh\Ub_l}$.
\end{enumerate}

We recall from \Cref{remark:compare_left_right} that, by the algorithmic construction of \Cref{algo:rsvd_power_iterations}, for given $q$, canonical angles of the right singular spaces $\sin\angle\rbr{\Vb_k, \wh\Vb_l}$ are evaluated with half more power iterations than those of the left singular spaces $\sin\angle\rbr{\Ub_k, \wh\Ub_l}$. 
That is, $\sin\angle\rbr{\Ub_k, \wh\Ub_l}$, $\sin\angle\rbr{\Vb_k, \wh\Vb_l}$ with $q=0,1$ in \Cref{fig:Gaussian-poly1-kl_k50_l200}-\Cref{fig:mnist-train800-kl_k50_l80} can be viewed as canonical angles of randomized subspace approximation with $q=0,0.5,1,1.5$ power iterations, respectively; while \Cref{fig:mnist-train800-kl_k50_l80_p5-10} corresponds to randomized subspace approximations constructed with $q=5,5.5,10,10.5$ power iterations analogously.

For each set of upper bounds/unbiased estimates, we observe the following.
\begin{enumerate}
    \item The {
        % \color{red} 
        space-agnostic probabilistic bounds (\b{Thm1 w/ $\sigmab$ and $\wh\sigmab$})
    } in \Cref{thm:space_agnostic_bounds} provide tighter statistical guarantees for the canonical angles of all the tested target matrices in comparison to those from {
        % \color{magenta}
        \cite{saibaba2018randomized} Theorem 1 (\b{S2018 Thm1 w/ $\sigmab$ and $\wh\sigmab$})
    }, as explained in \Cref{remark:prior_bound_compare_exist}.
    
    \item The 
    % \colorbox{blue!20}
    {% \color{blue}
    unbiased estimators (\b{Prop1 w/ $\sigmab$ and $\wh\sigmab$})} in \Cref{prop:space_agnostic_estimation} yield accurate approximations for the true canonical angles on all the tested target matrices with as few as $N=3$ trials, while enjoying good empirical concentrate. As a potential drawback, the accuracy of the unbiased estimates may be compromised when approaching the machine epsilon (as observed in \Cref{fig:SNN-m500n500r20a100-kl_k50_l200}, $\sin\angle\rbr{\Vb_k,\Yb},~q=1$).

    \item The {
        % \color{cyan} 
        posterior residual-based bounds (\b{Thm2 w/ $\sigmab$ and $\wh\sigmab$})
    } in \Cref{thm:with_oversmp_computable_det} are relatively tighter among the compared bounds in the setting with larger oversampling ($l=4k$), and no power iterations ($\sin\angle\rbr{\Ub_k,\wh\Ub_l}$ with $q=0$) or exponential spectral decay (\Cref{fig:Gaussian-exp-kl_k50_l200})
    
    \item The {
        % \color{cssgreen} 
        posterior residual-based bounds \Cref{eq:separate_gap_Uk_Ul,eq:separate_gap_Vk_Vl} (\b{Thm3 w/ $\sigmab$ and $\wh\sigmab$})
    } in \Cref{thm:separate_gap_bounds} share the similar relative tightness as the posterior residual-based bounds in \Cref{thm:with_oversmp_computable_det}, but are slightly more sensitive to power iterations. As shown in \Cref{fig:Gaussian-exp-kl_k50_l200}, on a target matrix with exponential spectral decay and large oversampling ($l=4k$), \Cref{thm:separate_gap_bounds} gives tighter posterior guarantees when $q>0$. However, with the addition assumptions $\sigma_k > \wh\sigma_{k+1}$ and $\sigma_k > \nbr{\Eb_{33}}_2$, \Cref{thm:separate_gap_bounds} usually requires large oversampling ($l=4k$) in order to provide non-trivial (\ie, {within} the range $[0,1]$) bounds.
\end{enumerate}

For target matrices with various patterns of spectral decay, with different combinations of oversampling ($l=1.6k,4k$) and power iterations ($q=0,1$), we make the following observations on the relative tightness of upper bounds in \Cref{thm:space_agnostic_bounds}, \Cref{thm:with_oversmp_computable_det}, and \Cref{thm:separate_gap_bounds}.
\begin{enumerate}
    \item For target matrices with subexponential spectral decay, the space-agnostic bounds in \Cref{thm:space_agnostic_bounds} are relatively tighter in most tested settings, except for the setting in \Cref{fig:Gaussian-exp-kl_k50_l200} with larger oversampling ($l=200$) and no power iterations ($q=0$).
    
    \item For target matrices with exponential spectral decay (\Cref{fig:Gaussian-exp-kl_k50_l200} and \Cref{fig:Gaussian-exp-kl_k50_l80}), the posterior residual-based bounds in \Cref{thm:with_oversmp_computable_det} and \Cref{thm:separate_gap_bounds} tend to be relatively tighter, especially with large oversampling (\Cref{fig:Gaussian-exp-kl_k50_l200} with $l=4k$). Meanwhile, with power iterations $q>0$, \Cref{thm:separate_gap_bounds} tend to be tighter than \Cref{thm:with_oversmp_computable_det}.
\end{enumerate} 

Furthermore, considering the scenario with an unknown true spectrum $\Sigmab$, we plot estimations for the upper bounds in \Cref{thm:space_agnostic_bounds}, \Cref{thm:with_oversmp_computable_det}, \Cref{thm:separate_gap_bounds}, and the unbiased estimates in \Cref{prop:space_agnostic_estimation}, evaluated with a padded approximation of the spectrum $\wt\Sigmab=\diag\rbr{\wh\sigma_1,\dots,\wh\sigma_l,\dots,\wh\sigma_l}$, which leads to mild overestimations, as marked in dashes from \Cref{fig:Gaussian-poly1-kl_k50_l200} to \Cref{fig:mnist-train800-kl_k50_l80_p5-10}.

\subsection{Balance between Oversampling and Power Iterations}\label{subsec:balance_oversampling_power_iterations_example}

To illustrate the insight cast by \Cref{thm:space_agnostic_bounds} on the balance between oversampling and power iterations, we consider the following synthetic example.
\begin{example}
Given a target rank $k \in \N$, we consider a simple synthetic matrix $\Ab \in \C^{r \times r}$ of size $r = (1+\beta)k$, consisting of random singular subspaces (generated by orthonormalizing Gaussian matrices) and a step spectrum:
\begin{align*}
    \sigma\rbr{\Ab}=\diag(\underbrace{\sigma_1,\dots,\sigma_1}_{\sigma_i=\sigma_1 ~\forall~ i \le k}, \underbrace{\sigma_{k+1},\dots,\sigma_{k+1}}_{\sigma_i=\sigma_{k+1} ~\forall~ i \ge k+1}).
\end{align*}
We fix a budget of $N=\alpha k$ matrix-vector multiplications with $\Ab$ in total. The goal is to distribute the computational budget between the sample size $l$ and the number of power iterations $q$ for the smaller canonical angles $\angle\rbr{\Ub_k, \wh\Ub_l}$.

Leveraging \Cref{thm:space_agnostic_bounds}, we start by fixing $\gamma>1$ associated with the constants $\eps_1=\gamma \sqrt{k/l}$ and $\eps_2=\gamma \sqrt{l/(r-k)}$ in \Cref{eq:space_agnostic_left} such that $l \ge \gamma^2 k$ and $2q+1 < \alpha/\gamma^2$. Characterized by $\gamma$, the right-hand-side of \Cref{eq:space_agnostic_left} under fixed budget $N$ (\ie, $N \ge l(2q+1)$) is defined as:
\begin{align}\label{eq:space_agnostic_example_rhs}
    \phi_\gamma\rbr{q} \dfeq &\rbr{1 + \frac{1-\eps_1}{1+\eps_2} \cdot \frac{l}{r-k} \rbr{\frac{\sigma_1}{\sigma_{k+1}}}^{4q+2}}^{-\frac{1}{2}}
    \\
    = &\rbr{1 + \frac{\alpha-\gamma\sqrt{\alpha (2q+1)}}{\beta(2q+1)+\gamma\sqrt{\alpha\beta(2q+1)}} \rbr{\frac{\sigma_1}{\sigma_{k+1}}}^{4q+2}}^{-\frac{1}{2}}. \nonumber
\end{align}
With the synthetic step spectrum, the dependence of \Cref{eq:space_agnostic_left} on $\sigma\rbr{\Ab}$ is reduced to the spectral gap $\sigma_1/\sigma_{k+1}$ in \Cref{eq:space_agnostic_example_rhs}. 

As a synopsis, \Cref{tab:space_agnostic_example_parameter_summary} summarizes the relevant parameters that characterize the problem setup.
\begin{table}[!h]
    \centering
    \caption{Given $\Ab \in \C^{r \times r}$ with a spectral gap $\sigma_1/\sigma_{k+1}$, a target rank $k$, and a budget of $N$ matrix-vector multiplications, we consider applying \Cref{algo:rsvd_power_iterations} with a sample size $l$ and $q$ power iterations.}
    \label{tab:space_agnostic_example_parameter_summary}
    \begin{tabular}{c|c|c}
    \toprule
        $\alpha$ & budget parameter & $N=\alpha k$ 
        \\
        $\beta$ & size parameter & $r=(1+\beta)k$ 
        \\
        $\gamma$ & oversampling parameter & $l \ge \gamma^2 k$ and $2q+1 \le \frac{\alpha}{\gamma^2}$
        \\
    \bottomrule
    \end{tabular}
\end{table} 

\begin{figure}[!ht]
    \centering
    \includegraphics[width=\textwidth]{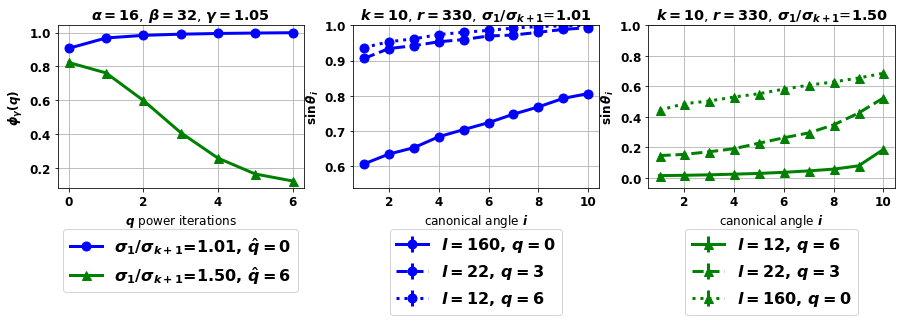}
    \caption{For $k=10$, $\alpha=16$, $\beta=32$, $\gamma=1.05$, the left figure marks $\phi_\gamma(q)$ (\ie, the right-hand side of \Cref{eq:space_agnostic_left} under the fixed budget $N$) with two different spectral gaps ($\wh q = \argmin_{1 \le 2q+1 \le \alpha/\gamma^2} \phi_\gamma\rbr{q}$), while the middle and the right figures demonstrate how the relative magnitudes of canonical angles $\sin\angle_i\rbr{\Ub_k,\wh\Ub_l}$ ($i \in [k]$) under different configurations (\ie, choices of $(l,q)$, showing the averages and ranges of $5$ trials)  align with the trends in $\phi_\gamma(q)$.}
    \label{fig:space-agno-ex_a16-b32-g1}
\end{figure}

\begin{figure}[!ht]
    \centering
    \includegraphics[width=\textwidth]{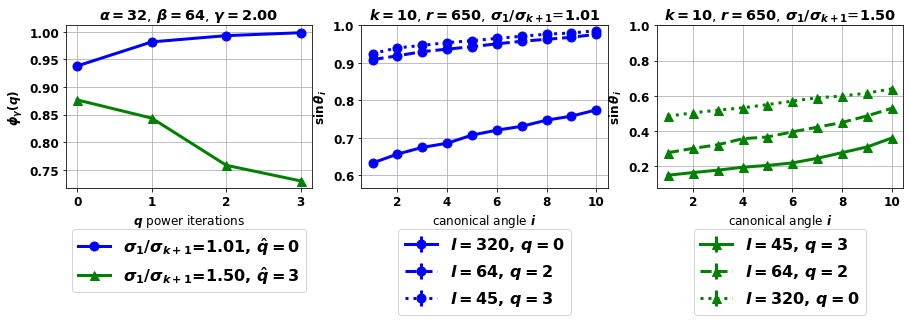}
    \caption{Under the same setup as \Cref{fig:space-agno-ex_a16-b32-g1}, for $k=10$, $\alpha=32$, $\beta=64$, $\gamma=2.00$, the trend in $\phi_\gamma(q)$ also aligns well with that in true canonical angles $\sin\angle_i\rbr{\Ub_k,\wh\Ub_l}$ ($i \in [k]$).}
    \label{fig:space-agno-ex_a32-b64-g2}
\end{figure}

With $k=10$, $\alpha=32$, $\beta=64$, and $\gamma \in \cbr{1.05, 2.00}$, \Cref{fig:space-agno-ex_a16-b32-g1} and \Cref{fig:space-agno-ex_a32-b64-g2} illustrate 
\begin{enumerate*}[label=(\roman*)]
    \item how the balance between oversampling and power iterations is affected by the spectral gap $\sigma_1/\sigma_{k+1}$, and more importantly, 
    \item how \Cref{eq:space_agnostic_example_rhs} unveils the trend in true canonical angles $\sin\angle_i\rbr{\Ub_k,\wh\Ub_l}$ among different configurations $\csepp{(l,q)}{l \ge \gamma^2k, 2q+1 \le \alpha/\gamma^2}$.
\end{enumerate*}

Concretely, both \Cref{fig:space-agno-ex_a16-b32-g1} and \Cref{fig:space-agno-ex_a32-b64-g2} imply that more oversampling (\eg, $q=0$) is preferred when the spectral gap is small (\eg, $\sigma_1/\sigma_{k+1}=1.01$); while more power iterations (\eg, $q=\lfloor \frac{\alpha/\gamma^2-1}{2} \rfloor$) are preferred when the spectral gap is large (\eg, $\sigma_1/\sigma_{k+1}=1.5$). Such trends are both observed in the true canonical angles $\sin\angle_i\rbr{\Ub_k,\wh\Ub_l}$ ($i \in [k]$) and well reflected by $\phi_\gamma\rbr{q}$.
\end{example}

\section{Discussion, Limitations, and Future Directions}\label{sec:discussion}

{We presented} prior and posterior bounds and estimates that can be computed efficiently for canonical angles of the randomized subspace approximation. 
Under moderate multiplicative oversampling, our prior probabilistic bounds are space-agnostic (\ie, independent of the unknown true subspaces), asymptotically tight, and can be computed in linear ($O(\rank\rbr{\Ab})$) time, while casting a clear {guidance} on the balance between oversampling and power iterations for a fixed budget of matrix-vector multiplications.
As corollaries of the prior probabilistic bounds, we introduce a set of unbiased canonical angle estimates that are efficiently computable and applicable to arbitrary choices of oversampling with good empirical concentrations.
In addition to the prior bounds and estimates, we further discuss two sets of posterior bounds that provide deterministic guarantees for canonical angles given the computed low-rank approximations.
With numerical experiments, we compare the empirical tightness of different canonical angle bounds and estimates on various data matrices under a diverse set of algorithmic choices for the randomized subspace approximation. 

As a major limitation of our space-agnostic bounds and estimates, we note that \Cref{thm:space_agnostic_bounds} and \Cref{prop:space_agnostic_estimation} rely crucially on the isotropic property of Gaussian random matrices and therefore are only applicable to Gaussian embeddings. Although Gaussian embedding is one of the most commonly used Johnson-Lindenstrauss transforms (JLT) both theoretically and in practice, in light of the appealing empirical performance~\cite{halko2011finding,martinsson2020randomized,dong2021simpler} of various alternatives (\eg, the subsampled randomized trigonometric transforms~\cite{woolfe2008fast,rokhlin2008fast,tropp2011improved, boutsidis2013srtt} and sparse sign matrices~\cite{meng2013low, nelson2013,woodruff2015sketching,clarkson2017low,tropp2017fixed}), it would be interesting to explore relaxations of the isotropic assumption and extend the analysis to these fast JLTs.
% Moreover, future work may include singular vector analysis of the block Lanczos method and Nystr\"om-type methods

\section*{Acknowledgments}
The work reported was supported by the Office of Naval
Research (N00014-18-1-2354), by the National Science Foundation (DMS-1952735, DMS-2012606), and by the Department of Energy ASCR (DE-SC0022251). The authors wish to thank the editor and the referees for their valuable insights and constructive suggestions.

\bibliographystyle{siamplain}
\bibliography{ref}

\clearpage
\appendix
\section{Technical Lemmas}\label{subapx:technical_proofs}
\begin{lemma}\label{lemma:sample_to_population_covariance}
Let $\xb \in \R^d$ be a random vector with $\E[\xb]=\b{0}$, $\E[\xb\xb^{\top}] = \Sigmab$, and $\overline{\xb} = \Sigmab^{-1/2} \xb$ 
\footnote{In the case where $\Sigmab$ is rank-deficient, we slightly abuse the notation such that $\Sigmab^{-1/2}$ and $\Sigmab^{-1}$ refer to the respective pseudo-inverses.}
being $\rho^2$-subgaussian\footnote{A random vector $\vb \in \R^d$ is $\rho^2$-subgaussian if for any unit vector $\ub \in \mathbb{S}^{d-1}$, $\ub^{\top} \vb$ is $\rho^2$-subgaussian, $\E \sbr{\exp(s \cdot \ub^{\top} \vb)} \leq \exp\rbr{s^2 \rho^2/2}$ for all $s \in \R$.}.
Consider a set of $n$ $\iid$ samples of $\xb$, {$\Xb=[\xb_1,\cdots,\xb_n]^* \in \R^{n \times d}$}, and a diagonal weight matrix $\Wb=\diag\rbr{w_1,\dots,w_n}$ with $w_i>0$ corresponding to each sample $i \in [n]$.
If $n \ge \frac{\tr\rbr{\Wb}^2}{\tr\rbr{\Wb^2}} \ge \frac{20736 \rho^4 d}{\eps^2} + \frac{10368\rho^4 \log(1/\delta)}{\eps^2}$, then with probability at least $1-\delta$,
\begin{align*}
    \rbr{1-\eps} \tr\rbr{\Wb} \Sigmab \aleq \Xb^{\top} \Wb \Xb \aleq \rbr{1+\eps} \tr\rbr{\Wb} \Sigmab
\end{align*}
Concretely, with $\Wb = \Ib_n$, $n = \frac{\tr\rbr{\Wb}^2}{\tr\rbr{\Wb^2}} = \Omega\rbr{\rho^4 d}$, and $\eps=\Theta\rbr{\rho^2 \sqrt{\frac{d}{n}}}$, $(1-\eps) \Sigmab \aleq \frac{1}{n}\Xb^{\top} \Xb \aleq (1+\eps) \Sigmab$ with high probability (at least $1-e^{-\Theta(d)}$).
\end{lemma}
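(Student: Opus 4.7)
The plan is to establish the matrix concentration bound via a whitening reduction, followed by a scalar Bernstein argument combined with a net over the sphere, adapted to the weighted sum structure.

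First, I would whiten to reduce to the isotropic case. Define $\overline{\xb}_i \dfeq \Sigmab^{-1/2}\xb_i$ and $\overline{\Xb} \dfeq [\overline{\xb}_1^{\top};\dots;\overline{\xb}_n^{\top}]$, so that $\Xb = \overline{\Xb}\Sigmab^{1/2}$ and
\begin{align*}
    \Xb^{\top}\Wb\Xb = \Sigmab^{1/2}\,\overline{\Xb}^{\top}\Wb\overline{\Xb}\,\Sigmab^{1/2}.
\end{align*}
Conjugating the desired Loewner inequality by $\Sigmab^{-1/2}$ (restricted to $\range\rbr{\Sigmab}$ when $\Sigmab$ is rank-deficient) reduces the claim to
\begin{align*}
    \nbr{\overline{\Xb}^{\top}\Wb\overline{\Xb} - \tr\rbr{\Wb}\Ib_d}_2 \le \eps\tr\rbr{\Wb},
\end{align*}
where the whitened samples $\overline{\xb}_i$ are \iid, centered, $\rho^2$-subgaussian, and isotropic ($\E[\overline{\xb}_i\overline{\xb}_i^{\top}] = \Ib_d$).

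Second, I would expand $\overline{\Xb}^{\top}\Wb\overline{\Xb} = \sum_{i=1}^n w_i\overline{\xb}_i\overline{\xb}_i^{\top}$ and reduce the spectral-norm control to pointwise bounds over a $1/4$-net $\Ncal \subseteq \Scal^{d-1}$ of cardinality at most $9^d$, using the standard estimate $\nbr{M}_2 \le 2\max_{\ub\in\Ncal}\abbr{\ub^{\top}M\ub}$ for symmetric $M$. For a fixed $\ub \in \Ncal$, set $Y_i \dfeq \overline{\xb}_i^{\top}\ub$; by isotropy and subgaussianity, $\E[Y_i]=0$, $\E[Y_i^2]=1$, and $Y_i$ is $\rho^2$-subgaussian, so $Y_i^2 - 1$ is a centered subexponential with $\psi_1$-norm bounded by $C_1\rho^2$ for some universal constant $C_1$. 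The quadratic form to be controlled becomes the weighted sum
\begin{align*}
    S(\ub) \dfeq \ub^{\top}\rbr{\overline{\Xb}^{\top}\Wb\overline{\Xb} - \tr\rbr{\Wb}\Ib_d}\ub = \sum_{i=1}^n w_i\rbr{Y_i^2 - 1}.
\end{align*}

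Third, I would apply the Bernstein inequality for sums of independent centered subexponentials to get, for every $t > 0$,
\begin{align*}
    P\rbr{\abbr{S(\ub)} > t} \le 2\exp\rbr{-c\min\rbr{\frac{t^2}{\rho^4\sum_i w_i^2},\ \frac{t}{\rho^2\max_i w_i}}}.
\end{align*}
Setting $t = \eps\tr\rbr{\Wb}/2$ and using the trivial bound $\max_i w_i \le \sqrt{\tr\rbr{\Wb^2}}$, the assumption $\tr\rbr{\Wb}^2/\tr\rbr{\Wb^2} = \Omega\rbr{\rho^4 d/\eps^2}$ forces the quadratic term in the exponent to dominate, yielding
\begin{align*}
    P\rbr{\abbr{S(\ub)} > \eps\tr\rbr{\Wb}/2} \le 2\exp\rbr{-c'\cdot\frac{\eps^2}{\rho^4}\cdot\frac{\tr\rbr{\Wb}^2}{\tr\rbr{\Wb^2}}}.
\end{align*}
Union-bounding over $\Ncal$ and demanding the failure probability be at most $\delta$ gives the sufficient condition $\tr\rbr{\Wb}^2/\tr\rbr{\Wb^2} \gtrsim \rho^4\rbr{d+\log(1/\delta)}/\eps^2$, matching the stated hypothesis once the explicit Bernstein constant $c'$, the factor $d\log 9$ coming from $\abbr{\Ncal}$, and the factor-of-two net-to-norm conversion are combined into the absolute coefficients $20736$ and $10368$. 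The specialization $\Wb = \Ib_n$ follows at once from $\tr\rbr{\Wb}=\tr\rbr{\Wb^2}=n$, and the auxiliary inequality $n\ge\tr\rbr{\Wb}^2/\tr\rbr{\Wb^2}$ is automatic by Cauchy--Schwarz and therefore non-binding.

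The main obstacle is the precise accounting of the absolute constants through the subexponential Bernstein inequality, the net cardinality, and the net-to-spectral-norm conversion, in order to recover the stated $20736$ and $10368$; the overall structure of the argument is otherwise routine, and no step requires a nonstandard tool beyond a subexponential Bernstein bound and a sphere net.
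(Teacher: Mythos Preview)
Your proposal is correct and follows essentially the same route as the paper: whiten to reduce to the isotropic case, control the resulting quadratic forms via Bernstein for subexponential summands, and pass from a net to the full sphere. The only differences are bookkeeping---the paper uses a $\frac{1}{3}$-net and an explicit approximation argument to convert $\max_{\vb\in N}|\vb^\top\Ub\vb|$ to $\|\Ub\|_2$ (incurring a factor $\tfrac{9}{2}$), whereas you invoke the standard $\tfrac{1}{4}$-net factor-of-two conversion; and the paper restricts to the subgaussian regime of Bernstein by imposing $\eps_2 \le 16\rho^2\|\betab\|_2^2/\|\betab\|_\infty$ a priori rather than arguing (as you do) that the quadratic term dominates under the sample-size hypothesis.
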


\begin{proof}
We first denote $\Pb_{\Xcal} \triangleq \Sigmab \Sigmab^{\pinv}$ as the orthogonal projector onto the subspace $\Xcal \subseteq \R^d$ supported by the distribution of $\xb$.
With the assumptions $\E[\xb]=\b{0}$ and $\E[\xb\xb^{\top}] = \Sigmab$, we observe that $\E \sbr{\overline{\xb}} = \b{0}$ and $\E\sbr{\overline{\xb} \overline{\xb}^{\top}} = \E \sbr{\rbr{\Sigmab^{-1/2}\xb} \rbr{\Sigmab^{-1/2}\xb}^{\top}} = \Pb_{\Xcal}$.
Given the sample set $\Xb$ of size $n \gg \rho^4\rbr{d+\log(1/\delta)}$ for any $\delta \in (0,1)$, we let
\begin{align*}
    \Ub = \frac{1}{\tr\rbr{\Wb}} \sum_{i=1}^n w_i \rbr{\Sigmab^{-1/2}\xb} \rbr{\Sigmab^{-1/2}\xb}^{\top} - \Pb_{\Xcal}.
\end{align*}
Then the problem can be reduced to showing that, for any $\eps>0$, with probability at least $1-\delta$, $\norm{\Ub}_2 \leq \eps$.
For this, we leverage the $\eps$-net argument as follows.

For an arbitrary $\vb \in \Xcal \cap\ \mathbb{S}^{d-1}$, we have
\begin{align*}
    \vb^{\top} \Ub \vb = 
    &\frac{1}{\tr\rbr{\Wb}} \sum_{i=1}^n 
    w_i \rbr{\vb^{\top} \rbr{\Sigmab^{-1/2}\xb} \rbr{\Sigmab^{-1/2}\xb}^{\top} \vb - 1} 
    \\= 
    &\frac{1}{\tr\rbr{\Wb}} \sum_{i=1}^n 
    w_i \rbr{\rbr{\vb^{\top} \overline{\xb}_i}^2 - 1},
\end{align*}
where, given $\overline{\xb}_i$ being $\rho^2$-subgaussian, 
$\vb^{\top} \overline{\xb}_i$ is $\rho^2$-subgaussian. 
Since 
\begin{align*}
    \E \sbr{\rbr{\vb^{\top} \overline{\xb}_i}^2} = \vb^{\top} \E \sbr{\overline{\xb}_i \overline{\xb}_i^{\top}} \vb = 1,
\end{align*}
we know that $\rbr{\vb^{\top} \overline{\xb}_i}^2-1$ is $16\rho^2$-subexponential\footnote{We abbreviate $\rbr{\nu,\nu}$-subexponential (\ie, recall that a random variable $X$ is $\rbr{\nu,\alpha}$-subexponential if $\E\sbr{\exp\rbr{s X}} \le \exp\rbr{s^2 \nu^2/2}$ for all $\abbr{s} \le 1/\alpha$) simply as $\nu$-subexponential.}.
With $\beta_i \dfeq \frac{w_i}{\tr\rbr{\Wb}}$ for all $i \in [n]$ such that {$\betab = \sbr{\beta_1,\cdots,\beta_n}^*$}, we recall Bernstein's inequality \cite[Theorem 2.8.2]{vershynin2018high}\cite[Section 2.1.3]{wainwright2019high},
\begin{align*}
    &\PP\sbr{\abbr{\vb^{\top} \Ub \vb} = \abbr{\sum_{i=1}^n \beta_i \cdot \rbr{\rbr{\vb^{\top} \overline{\xb}_i}^2-1}} > t} \\
    &\leq 
    2 \exp \rbr{-\frac{1}{2} 
    \min\rbr{\frac{t^2}{\rbr{16 \rho^2}^2 \nbr{\betab}_2^2}, 
    \frac{t}{16 \rho^2 \nbr{\betab}_\infty}}},
\end{align*}
where $\nbr{\betab}_2^2 = \frac{\tr\rbr{\Wb^2}}{\tr\rbr{\Wb}^2}$ and $\nbr{\betab}_\infty = \frac{\max_{i \in [n]} w_i}{\tr\rbr{\Wb}}$.

Let $N \subset \Xcal \cap\ \mathbb{S}^{d-1}$ be an $\eps_1$-net such that $\abbr{N} = \rbr{1+\frac{2}{\eps_1}}^d$. 
Then for some $0 < \eps_2 \leq 16 \rho^2 \frac{\nbr{\betab}_2^2}{\nbr{\betab}_\infty}$, by the union bound,
\begin{align*}
    \PP \sbr{\underset{\vb \in N}{\max}: \abbr{\vb^{\top} \Ub \vb} > \eps_2} 
    \leq \ 
    & 2 \abbr{N} \exp \rbr{-\frac{1}{2}
    \min\rbr{\frac{\eps_2^2}{\rbr{16 \rho^2}^2 \nbr{\betab}_2^2}, 
    \frac{\eps_2}{16 \rho^2 \nbr{\betab}_\infty}} } 
    \\ \leq \ 
    & \exp \rbr{d\log\rbr{1+\frac{2}{\eps_1}} -\frac{1}{2} \cdot \frac{\eps_2^2}{\rbr{16 \rho^2}^2 \nbr{\betab}_2^2} } \le \delta
\end{align*}
whenever $\frac{1}{\nbr{\betab}_2^2} \ge \frac{\tr\rbr{\Wb}^2}{\tr\rbr{\Wb^2}} = 2 \rbr{\frac{16 \rho^2}{\eps_2}}^2 \rbr{d\log\rbr{1+\frac{2}{\eps_1}} + \log\frac{1}{\delta}}$ where $1 < \frac{\tr\rbr{\Wb}^2}{\tr\rbr{\Wb^2}} \le n$. 

Now for any $\vb \in \Xcal \cap\ \mathbb{S}^{d-1}$, there exists some $\vb' \in N$ such that $\norm{\vb - \vb'}_2 \leq \eps_1$. 
Therefore,
\begin{align*}
    \abbr{\vb^{\top} \Ub \vb} 
    \ = \
    & \abbr{\vb'^{\top} \Ub \vb' + 
    2 \vb'^{\top} \Ub \rbr{\vb - \vb'} + 
    \rbr{\vb - \vb'}^{\top} \Ub \rbr{\vb - \vb'} }
    \\ \leq \ 
    & \rbr{ \underset{\vb \in N}{\max}: \abbr{\vb^{\top} \Ub \vb} } + 
    2 \norm{\Ub}_2 \norm{\vb'}_2 \norm{\vb-\vb'}_2 + 
    \norm{\Ub}_2 \norm{\vb-\vb'}_2^2
    \\ \leq \ 
    & \rbr{ \underset{\vb \in N}{\max}: \abbr{\vb^{\top} \Ub \vb} } + 
    \norm{\Ub}_2
    \rbr{2 \eps_1 + \eps_1^2}.
\end{align*}
Taking the supremum over $\vb \in \mathbb{S}^{d-1}$, with probability at least $1-\delta$,
\begin{align*}
    \underset{\vb \in \Xcal \cap\ \mathbb{S}^{d-1}}{\max}: \abbr{\vb^{\top} \Ub \vb}
    = 
    \norm{\Ub}_2
    \leq 
    \eps_2 + \norm{\Ub}_2 \rbr{2 \eps_1 + \eps_1^2},
    \qquad
    \norm{\Ub}_2
    \leq 
    \frac{\eps_2}{2 - \rbr{1+\eps_1}^2}.
\end{align*}
With $\eps_1 = \frac{1}{3}$, we have $\eps = \frac{9}{2}\eps_2$.

Overall, if $n \ge \frac{\tr\rbr{\Wb}^2}{\tr\rbr{\Wb^2}} \ge \frac{1024 \rho^4 d}{\eps_2^2} + \frac{512 \rho^4}{\eps_2^2}\log\frac{1}{\delta}$, then with probability at least $1-\delta$, we have $\norm{\Ub}_2 \leq \eps$.

As a concrete instance, when $\Wb=\Ib_n$ and $n = \frac{\tr\rbr{\Wb}^2}{\tr\rbr{\Wb^2}} \ge 9^2 \cdot 1025 \cdot \rho^4 d$, by taking $\eps_2 = \sqrt{\frac{1025 \rho^4 d}{n}}$, we have $\nbr{\Ub}_2 \le \frac{1}{2} \sqrt{\frac{9^2 \cdot 1025 \cdot \rho^4 d}{n}}$ with high probability (at least $1-\delta$ where $\delta = \exp\rbr{-\frac{d}{512}}$).
\end{proof}

\begin{lemma}[Cauchy interlacing theorem]\label{lemma:Cauchy_interlacing_theorem}
Given an arbitrary matrix $\Ab \in \C^{m \times n}$ and an orthogonal projection $\Qb \in \C^{n \times k}$ with orthonormal columns, for all $i=1,\dots,k$,
\begin{align*}
    \sigma_{i}\rbr{\Ab \Qb} \leq \sigma_{i}\rbr{\Ab}.
\end{align*}
\end{lemma}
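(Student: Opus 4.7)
The plan is to derive the inequality directly from the Courant--Fischer (min-max) characterization of singular values, exploiting the isometry property of $\Qb$ on its column space.

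First, I would recall the max-min representation
\begin{align*}
\sigma_i(\Mb) = \max_{\substack{S \subseteq \C^d \\ \dim S = i}} \ \min_{\substack{\vb \in S \\ \|\vb\|_2 = 1}} \|\Mb \vb\|_2,
\end{align*}
valid for any $\Mb \in \C^{m \times d}$ and $1 \le i \le \min(m,d)$. Apply this first to $\Mb = \Ab\Qb \in \C^{m \times k}$, so the maximum is taken over $i$-dimensional subspaces $S \subseteq \C^k$.

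Next, I would exploit the hypothesis that $\Qb \in \C^{n \times k}$ has orthonormal columns, so $\|\Qb \vb\|_2 = \|\vb\|_2$ for every $\vb \in \C^k$. Under the change of variables $\wb = \Qb \vb$, each $i$-dimensional subspace $S \subseteq \C^k$ corresponds bijectively to an $i$-dimensional subspace $T = \Qb S \subseteq \range(\Qb) \subseteq \C^n$, with the unit sphere in $S$ mapped to the unit sphere in $T$. This lets me rewrite
\begin{align*}
\sigma_i(\Ab\Qb) = \max_{\substack{T \subseteq \range(\Qb) \\ \dim T = i}} \ \min_{\substack{\wb \in T \\ \|\wb\|_2 = 1}} \|\Ab \wb\|_2.
\end{align*}

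Finally, I would compare this with the same max-min formula for $\sigma_i(\Ab)$, where the outer maximum ranges over \emph{all} $i$-dimensional subspaces of $\C^n$ rather than only those contained in $\range(\Qb)$. Since we are maximizing the same functional over a smaller collection of subspaces, the inequality $\sigma_i(\Ab\Qb) \le \sigma_i(\Ab)$ follows immediately for every $i \in [k]$. There is no real obstacle here; the only subtlety to get right is the bijection between $i$-dimensional subspaces of $\C^k$ and $i$-dimensional subspaces of $\range(\Qb)$ under the isometry $\Qb$, which ensures no shrinkage of the feasible set of test vectors occurs when passing from $S$ to $T$.
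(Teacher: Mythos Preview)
Your proof is correct and follows essentially the same approach as the paper: both invoke the Courant--Fischer max--min characterization of singular values and use the fact that $\Qb$ is an isometry to identify $i$-dimensional subspaces of $\C^k$ with $i$-dimensional subspaces of $\range(\Qb) \subseteq \C^n$, thereby restricting the outer maximization. The paper's version is terser (working with the Rayleigh quotient of $\Qb^*\Ab^*\Ab\Qb$ and leaving the isometric embedding implicit), while you spell out the change of variables $\wb = \Qb\vb$ explicitly, but the underlying argument is the same.
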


\begin{proof}[Proof of \Cref{lemma:Cauchy_interlacing_theorem}]
Let $\csepp{\vb_j \in \C^k}{j=1,\dots,k}$ be right singular vectors of $\Ab\Qb$. By the min-max theorem (\cf \cite{golub2013} Theorem 8.6.1), 
\begin{align*}
    \sigma_{i}\rbr{\Ab \Qb}^2
    = &\min_{\xb \in \spn\cbr{\vb_1,\dots,\vb_i}\setminus\rbr{\b0}} \frac{\xb^\top \Qb^\top \Ab^\top \Ab \Qb \xb}{\xb^\top \xb} \\
    \leq &\max_{\dim\rbr{\Vcal}=i} \min_{\xb \in \Vcal\setminus\rbr{\b0}} \frac{\xb^\top \Ab^\top \Ab \xb}{\xb^\top \xb} = \sigma_i\rbr{\Ab}^2.
\end{align*}  
\end{proof}

\begin{lemma}[\cite{horn_johnson_2012} (7.3.14)]\label{lemma:individual_sval_of_product}
For arbitrary matrices $\Ab,\Bb \in \C^{m \times n}$,
\begin{align}\label{eq:lemma_individual_sval_of_product}
    \sigma_i\rbr{\Ab \Bb^*} \le \sigma_i\rbr{\Ab} \sigma_j\rbr{\Bb}
\end{align} 
for all $i \in [\rank\rbr{\Ab}]$, $j \in [\rank\rbr{\Bb}]$ such that $i+j-1 \in [\rank\rbr{\Ab\Bb^*}]$.
\end{lemma}

\begin{lemma}[\cite{horn_johnson_2012} (7.3.13)] \label{lemma:individual_sval_of_sum}
For arbitrary matrices $\Ab,\Bb \in \C^{m \times n}$,
\begin{align}\label{eq:lemma_individual_sval_of_sum}
    \sigma_{i+j-1}\rbr{\Ab + \Bb} \le \sigma_i\rbr{\Ab} + \sigma_j\rbr{\Bb}
\end{align}    
for all $i \in [\rank\rbr{\Ab}]$, $j \in [\rank\rbr{\Bb}]$ such that $i+j-1 \in [\rank\rbr{\Ab+\Bb}]$.
\end{lemma} 

\section{Supplementary Experiments: Lower Space-agnostic Bounds}\label{subapx:sup_exp_upper_lower_space_agnostic}

In this section, we visualize and compare the upper and lower bounds in \Cref{thm:space_agnostic_bounds} under the sufficient multiplicative oversampling regime (\ie, $l = 4k$. Recall that $k < l < r = \rank\rbr{\Ab}$ where $k$ is the target rank, $l$ is the oversampled rank, and $r$ is the full rank of the matrix $\Ab$).

\begin{figure}[!ht]
    \centering
    \includegraphics[width=\linewidth]{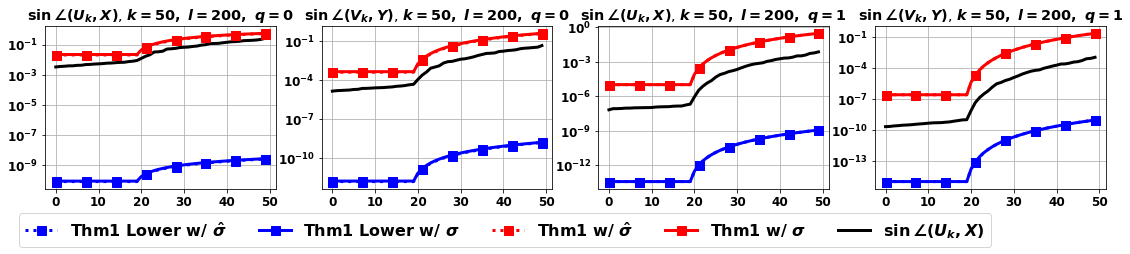}
    \caption{Synthetic Gaussian with the slower spectral decay. $k=50$, $l=200$, $q=0,1$.}
    \label{fig:lower_Gaussian-poly1-kl_k50_l200}
\end{figure}

\begin{figure}[!ht]
    \centering
    \includegraphics[width=\linewidth]{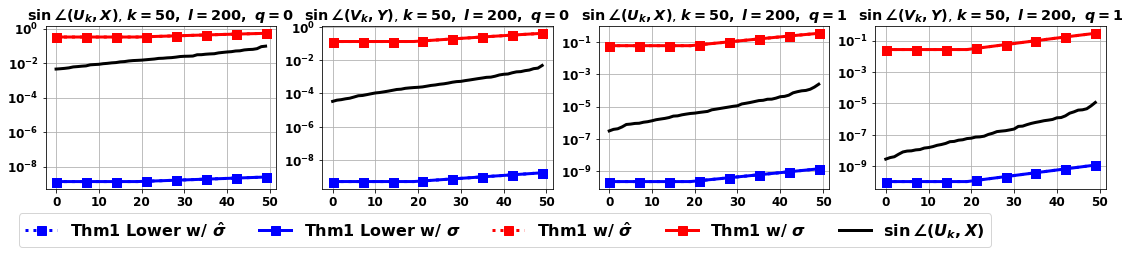}
    \caption{Synthetic Gaussian with the faster spectral decay. $k=50$, $l=200$, $q=0,1$.}
    \label{fig:lower_Gaussian-exp-kl_k50_l200}
\end{figure}

\begin{figure}[!ht]
    \centering
    \includegraphics[width=\linewidth]{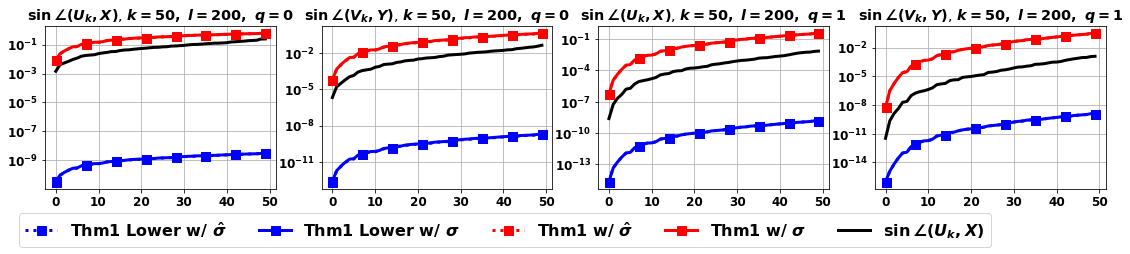}
    \caption{SNN with $r_1=20$, $a=1$. $k=50$, $l=200$, $q=0,1$.}
    \label{fig:lower_SNN-m500n500r20a1-kl_k50_l200}
\end{figure}

\begin{figure}[!ht]
    \centering
    \includegraphics[width=\linewidth]{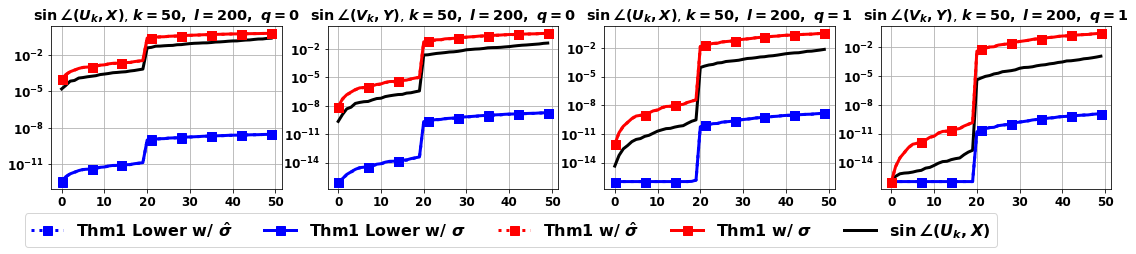}
    \caption{SNN with $r_1=20$, $a=100$. $k=50$, $l=200$, $q=0,1$.}
    \label{fig:lower_SNN-m500n500r20a100-kl_k50_l200}
\end{figure}

\begin{figure}[!ht]
    \centering
    \includegraphics[width=\linewidth]{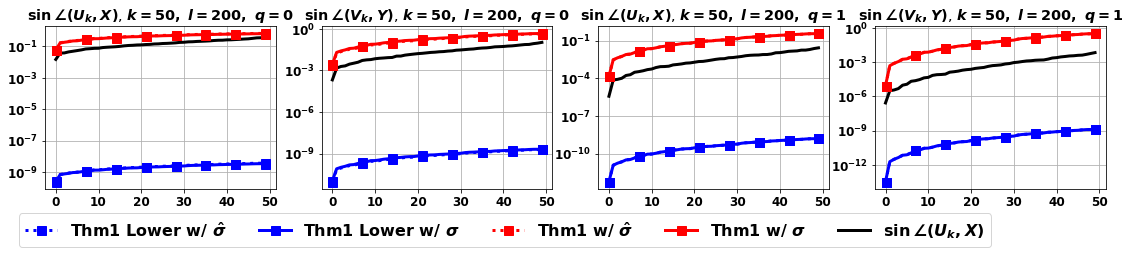}
    \caption{$800$ randomly sampled images from the MNIST training set. $k=50$, $l=200$, $q=0,1$.}
    \label{fig:lower_mnist-train800-kl_k50_l200}
\end{figure}

With the same set of target matrices described in \Cref{subsec:target_matrix}, from \Cref{fig:lower_Gaussian-poly1-kl_k50_l200} to \Cref{fig:lower_mnist-train800-kl_k50_l200},
\begin{enumerate}
    \item {
        % \color{red} 
        Red lines and dashes (\b{Thm1 w/ $\sigmab$ and $\wh\sigmab$})
    } show the upper bounds in \Cref{eq:space_agnostic_left,eq:space_agnostic_right} evaluated with the true (lines) and approximated (dashes) singular values, $\Sigmab$, and $\wt\Sigmab$, respectively, where we simply ignore tail decay and suppress constants for the distortion factors and set 
    \begin{align*}
        \eps_1 = \sqrt{\frac{k}{l}}
        \quad\text{and}\quad
        \eps_2 = \sqrt{\frac{l}{r-k}}.
    \end{align*}
    
    \item {
        % \color{blue} 
        Blue lines and dashes (\b{Thm1 Lower w/ $\sigmab$ and $\wh\sigmab$})
    } present the lower bounds in \Cref{eq:space_agnostic_lower_left,eq:space_agnostic_lower_right} evaluated with $\Sigmab$ and $\wt\Sigmab$, respectively, and slightly larger constants associated with the distortion factors 
    \begin{align*}
        \eps'_1 = 2\sqrt{\frac{k}{l}}
        \quad\text{and}\quad
        \eps'_2 = 2\sqrt{\frac{l}{r-k}}.
    \end{align*}
\end{enumerate}

The numerical observations imply that the empirical validity of lower bounds requires more aggressive oversampling than that of upper bounds. In particular, we recall from \Cref{subsec:experiment_canonical_angle} that $l \ge 1.6k$ is usually sufficient for the upper bounds to hold numerically. In contrast, the lower bounds generally require at least $l \ge 4k$, with slightly larger constants associated with the distortion factors $\eps_1 = \Theta\rbr{\sqrt{k/l}}$ and $\eps_2 = \Theta\rbr{\sqrt{l/\rbr{r-k}}}$.

\end{document}